\newtheorem{theorem}{Theorem}[section]
\newtheorem{lemma}[theorem]{Lemma}
\newtheorem{corollary}[theorem]{Corollary}
\newtheorem{proposition}[theorem]{Proposition}
\newtheorem*{theorem*}{Theorem}
\theoremstyle{definition}
\newtheorem{example}[theorem]{Example}
\theoremstyle{remark}
\newtheorem{remark}[theorem]{Remark}
\DeclareMathOperator{\fpmod}{mod}
\DeclareMathOperator{\Mod}{Mod}
\DeclareMathOperator{\proj}{proj}
\DeclareMathOperator{\inj}{inj}
\DeclareMathOperator{\Proj}{Proj}
\DeclareMathOperator{\Hom}{Hom}
\DeclareMathOperator{\End}{End}
\begin{document}
\title{Lifting and restricting t-structures}
\author{Frederik Marks and Alexandra Zvonareva}
\address{Frederik Marks, Institut f\"ur Algebra und Zahlentheorie, Universit\"at Stuttgart, Pfaffenwaldring 57, 70569 Stuttgart, Germany}
\email{marks@mathematik.uni-stuttgart.de}
\address{Alexandra Zvonareva, Institut f\"ur Algebra und Zahlentheorie, Universit\"at Stuttgart, Pfaffenwaldring 57, 70569 Stuttgart, Germany}
\email{alexandra.zvonareva@mathematik.uni-stuttgart.de}
\keywords{t-structure, heart of a t-structure, Grothendieck category, homotopy colimit, HRS-t-structure, silting theory}
\subjclass[2010]{16E35,18E15,18E30,18E40}
\thanks{The first named author was supported by a postdoctoral fellowship of the Baden-W\"urttemberg Stiftung. Both authors acknowledge support from DFG (Deutsche Forschungsgemeinschaft) through a scientific network on silting theory.}


\begin{abstract}
We explore the interplay between t-structures in the bounded derived category of finitely presented modules and the unbounded derived category of all modules over a coherent ring $A$ using homotopy colimits. More precisely, we show that every intermediate t-structure in $D^b(\fpmod(A))$ can be lifted to a compactly generated t-structure in $D(\Mod(A))$, by closing the aisle and the coaisle of the t-structure under directed homotopy colimits. Conversely, we provide necessary and sufficient conditions for a compactly generated t-structure in $D(\Mod(A))$ to restrict to an intermediate t-structure in $D^b(\fpmod(A))$, thus describing which t-structures can be obtained via lifting. We apply our results to the special case of HRS-t-structures. Finally, we discuss various applications to silting theory in the context of finite dimensional algebras.
\end{abstract}
\maketitle


\section{introduction}
A modern perspective on representation theory suggests to view the category of modules over a ring as an abelian subcategory of its derived category, and to study properties of the ring via complexes of modules. Such a triangulated point of view allows to compare the representation theories of two given rings even if their module categories are rather different. Foundational work in this direction goes back to Happel \cite{H}, Rickard \cite{R} and Keller \cite{K}, who extended classical Morita theory to the setup of derived categories, explaining when two derived module categories are triangle equivalent. 
The key notion in this context are tilting complexes. In recent years, tilting theory was generalized to silting theory, which allows to study more generally the embeddings of module categories into derived categories as hearts of associated t-structures (see e.g. \cite{AI,KY,NSZ,PV}).

In general, t-structures play a fundamental role in understanding the structure of triangulated categories such as derived module categories. 
Following \cite{BBD}, every t-structure gives rise to an abelian category -- called the heart of the t-structure -- connected to the ambient triangulated category via a cohomological functor. In other words, every t-structure induces a cohomology theory on the ambient triangulated category. Nowadays, t-structures are prominently used in 
 different branches of mathematics such as algebraic geometry, representation theory, category theory and topology. 
 
 In particular, understanding bounded t-structures on the bounded derived category of finitely presented modules $D^b(\fpmod(A))$ over a finite dimensional algebra $A$ or on the bounded derived category of coherent sheaves on a smooth projective variety $\mathbb{X}$ is crucial for computing the space of Bridgeland stability conditions. In representation theory, bounded t-structures appear in the fundamental correspondence with silting complexes, co-t-structures and simple minded collections (see \cite{KV,KY,RR}).
 Nevertheless, even for bounded derived categories of finite dimensional algebras, many questions are still open, in particular, when it comes to constructing t-structures explicitly.
On the contrary, in the unbounded derived category of all modules $D(\Mod(A))$ over an arbitrary ring $A$, one can always generate a t-structure from any given set of objects \cite{AJS,SS}.  
 
In this paper, we explore for a coherent ring $A$ the interplay between t-structures in  $D^b(\fpmod(A))$ and in $D(\Mod(A))$. We restrict ourselves to intermediate t-structures, sitting in an interval between shifts of the standard t-structure. Note that if $A$ is a finite dimensional algebra, a t-structure in $D^b(\fpmod(A))$ is intermediate if and only if it is bounded.

Building on \cite{SSV}, where it was proved that t-structures in $D(\Mod(A))$ lift to coherent diagrams, we show that every intermediate t-structure $(\mathcal{X},\mathcal{Y})$ in $D^b(\fpmod(A))$ with heart $\mathcal{H}$ can be lifted to a compactly generated t-structure $(\underrightarrow{\operatorname{hocolim}}(\mathcal{X}),\underrightarrow{\operatorname{hocolim}}(\mathcal{Y}))$ in $D(\Mod(A))$ by closing the classes $\mathcal{X}$ and $\mathcal{Y}$ under directed homotopy colimits.   Moreover, the heart of this new t-structure is a locally coherent Grothendieck category with the set of finitely presented objects given by $\mathcal{H}$ (see Section \ref{lifting}). 
This process of lifting t-structures from $D^b(\fpmod(A))$ to $D(\Mod(A))$ can be considered as a generalization of lifting torsion pairs $(\mathcal{T},\mathcal{F})$ from $\fpmod(A)$ to torsion pairs $(\varinjlim(\mathcal{T}),\varinjlim(\mathcal{F}))$ in $\Mod(A)$ (see \cite[Section 4.4]{CB}).

In Section \ref{restricting}, we discuss the problem of restricting t-structures from $D(\Mod(A))$ to $D^b(\fpmod(A))$. Note that except for the commutative case discussed in \cite{AJSa} not much seems to be known. We prove that a compactly generated and intermediate t-structure in $D(\Mod(A))$ restricts to $D^b(\fpmod(A))$ if and only if every object in the heart $\mathbf{H}$ is a directed homotopy co\-limit of objects in $\mathbf{H}\cap D^b(\fpmod(A))$ (see Theorem \ref{thm rest}). Altogether, via lifting and restricting t-structures, we obtain the following result.

\begin{theorem*}[see Corollary \ref{bijection}]
Lifting and restricting t-structures yields a bijetcion between
\begin{enumerate}
\item intermediate t-structures in $D^b(\fpmod(A))$;
\item intermediate, compactly generated t-structures in $D(\Mod(A))$ with a locally coherent Grothendieck heart $\mathbf{H}$, whose finitely presented objects are given by $\mathbf{H}\cap D^b(\fpmod(A))$.
\end{enumerate}
\end{theorem*}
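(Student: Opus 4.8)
The plan is to make the two operations explicit and reduce both composites to a single orthogonality observation, leaving the substantive work to the lifting construction of Section~\ref{lifting} and to Theorem~\ref{thm rest}. Write $\Phi$ for the assignment $(\mathcal{X},\mathcal{Y})\mapsto(\underrightarrow{\operatorname{hocolim}}(\mathcal{X}),\underrightarrow{\operatorname{hocolim}}(\mathcal{Y}))$ and $\Psi$ for the assignment $(\mathbf{X},\mathbf{Y})\mapsto(\mathbf{X}\cap D^b(\fpmod(A)),\mathbf{Y}\cap D^b(\fpmod(A)))$. First I would check that $\Phi$ lands in class~(2): this is exactly what the lifting construction of Section~\ref{lifting} provides, namely a compactly generated, intermediate t-structure whose heart $\mathbf{H}$ is a locally coherent Grothendieck category with finitely presented objects $\mathcal{H}$; the remaining point $\mathcal{H}=\mathbf{H}\cap D^b(\fpmod(A))$ I would then read off from the identity $\Psi\Phi=\mathrm{id}$ by intersecting aisle and shifted coaisle with $D^b(\fpmod(A))$. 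Next I would check that $\Psi$ lands in class~(1): for a t-structure in class~(2), local coherence of $\mathbf{H}$ means that every object of $\mathbf{H}$ is a directed colimit of finitely presented objects, i.e.\ of objects of $\mathbf{H}\cap D^b(\fpmod(A))$; since the cohomological functor of a compactly generated t-structure preserves coproducts and the relevant $\varprojlim^{1}$-terms vanish by orthogonality, such directed colimits in $\mathbf{H}$ are computed as directed homotopy colimits in $D(\Mod(A))$. Thus the hypothesis of Theorem~\ref{thm rest} holds and $\Psi(\mathbf{X},\mathbf{Y})$ is an intermediate t-structure in $D^b(\fpmod(A))$.

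The observation driving both composites is the following: if $(\mathcal{A}_1,\mathcal{B}_1)$ and $(\mathcal{A}_2,\mathcal{B}_2)$ are t-structures on the same triangulated category with $\mathcal{A}_1\subseteq\mathcal{A}_2$ and $\mathcal{B}_1\subseteq\mathcal{B}_2$, then they coincide, since $\mathcal{A}_2={}^{\perp}\mathcal{B}_2\subseteq{}^{\perp}\mathcal{B}_1=\mathcal{A}_1$ and hence $\mathcal{B}_1=\mathcal{A}_1^{\perp}=\mathcal{A}_2^{\perp}=\mathcal{B}_2$ (orthogonality with respect to $\Hom$, as for any t-structure). For $\Psi\Phi=\mathrm{id}$ I would apply this to $(\mathcal{X},\mathcal{Y})$ and to $\Psi\Phi(\mathcal{X},\mathcal{Y})$: the second pair is a t-structure in $D^b(\fpmod(A))$ because $\Phi(\mathcal{X},\mathcal{Y})$ lies in class~(2) and therefore restricts by Theorem~\ref{thm rest}, and the inclusions $\mathcal{X}\subseteq\underrightarrow{\operatorname{hocolim}}(\mathcal{X})\cap D^b(\fpmod(A))$ and $\mathcal{Y}\subseteq\underrightarrow{\operatorname{hocolim}}(\mathcal{Y})\cap D^b(\fpmod(A))$ are clear since a constant diagram is its own homotopy colimit. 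For $\Phi\Psi=\mathrm{id}$ I would apply the observation to $(\mathbf{X},\mathbf{Y})$ and to $\Phi\Psi(\mathbf{X},\mathbf{Y})$: the latter is a t-structure in $D(\Mod(A))$ by the lifting theorem applied to $\Psi(\mathbf{X},\mathbf{Y})$, and the inclusions $\underrightarrow{\operatorname{hocolim}}(\mathbf{X}\cap D^b(\fpmod(A)))\subseteq\mathbf{X}$, $\underrightarrow{\operatorname{hocolim}}(\mathbf{Y}\cap D^b(\fpmod(A)))\subseteq\mathbf{Y}$ follow once one knows that both the aisle and the coaisle of a compactly generated t-structure are closed under directed homotopy colimits. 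These two identities exhibit $\Phi$ and $\Psi$ as mutually inverse bijections between classes~(1) and~(2).

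That last closure is the only step that is not purely formal. The aisle is a compactly generated aisle, hence closed under coproducts, hence under directed homotopy colimits. For the coaisle $\mathbf{Y}$, choosing a set $\mathcal{S}$ of compact generators inside the aisle gives $\mathbf{Y}=\{Y:\Hom(S,Y[n])=0\text{ for all }S\in\mathcal{S},\ n\le 0\}$, and for a directed system $(Y_i)$ in $\mathbf{Y}$ one has $\Hom(S,(\underrightarrow{\operatorname{hocolim}}Y_i)[n])\cong\varinjlim\Hom(S,Y_i[n])=0$ for every $S\in\mathcal{S}$ and $n\le 0$, because compact objects commute with directed homotopy colimits. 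I expect the genuine obstacle to lie not in this bookkeeping but in the two inputs it rests on: the restrictability criterion of Theorem~\ref{thm rest}, and the verification that, for a compactly generated intermediate t-structure, the property ``$\mathbf{H}$ locally coherent Grothendieck with finitely presented objects $\mathbf{H}\cap D^b(\fpmod(A))$'' really is equivalent to the homotopy-colimit condition of that theorem --- the place where one must know that directed colimits formed inside the heart agree with directed homotopy colimits taken in $D(\Mod(A))$.
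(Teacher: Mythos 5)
Your proposal is correct and follows essentially the same route as the paper: lifting lands in class~(2) by Lemma~\ref{comp.gen.} and Theorem~\ref{thm lifting}, local coherence together with the identification of direct limits in $\mathbf{H}$ with directed homotopy colimits (the cited results of Saor\'in--\v{S}\v{t}ov\'\i\v{c}ek--Virili) yields condition~(3) of Theorem~\ref{thm rest}, and that theorem then shows lifting and restricting are mutually inverse. Your explicit orthogonality observation for nested t-structures is merely a tidier packaging of the final bookkeeping that the paper leaves implicit.
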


As a consequence, we get that a derived equivalence $D(\Mod(B))\overset{F}{\longrightarrow} D(\Mod(A))$ preserves coherent rings if and only if the image under $F$ of the standard t-structure in $D(\Mod(B))$ restricts to $D^b(\fpmod(A))$ (see Corollary \ref{coherent} and Remark \ref{rem}). 
Moreover, when restricting the bijection above to the special case of noetherian rings and HRS-t-structures, that is intermediate t-structures with the smallest possible interval, the bijection takes a simpler form and recovers some results on lifting torsion pairs (see Section \ref{section HRS}).

In Section \ref{section silting}, we discuss applications to silting theory. First of all, it turns out that every lifted t-structure in $D(\Mod(A))$ is controlled by a not necessarily small cosilting complex. 
In case $A$ is a finite dimensional algebra we can establish a dual way of lifting by cogenerating a t-structure in $D(\Mod(A))$ from a t-structure in $D^b(\fpmod(A))$. In this way, we obtain silting t-structures with respect to not necessarily compact silting complexes (see Theorem \ref{thm silt}). In general, we do not know how to classify the silting t-structures of this form. Only in the case of HRS-t-structures we get a complete picture (see Proposition \ref{HRS dual}). Finally, by combining both ways of lifting, we can show that for an intermediate t-structure $(\mathcal{X},\mathcal{Y})$ in $D^b(\fpmod(A))$ there is a unique t-structure in $D(\Mod(A))$ restricting to $(\mathcal{X},\mathcal{Y})$ if and only if $(\mathcal{X},\mathcal{Y})$ is algebraic, i.e. its heart is a finite length category (see Corollary \ref{cor restrict}).

\bigskip

{\bf Acknowledgments.} The authors are grateful to Michal Hrbek, Simone Virili and Jorge Vit\'oria for helpful discussions on different parts of the presented material. The authors also thank the anonymous referee for their careful reading and valuable comments. 


\section{Preliminaries}
\subsection{Notation.}
Throughout, let $A$ be a \textbf{right coherent ring}, that is a ring $A$ such that every finitely generated submodule of a finitely presented right $A$-module is again finitely presented. The category of right $A$-modules is denoted by $\Mod(A)$, its subcategory of finitely presented modules by $\fpmod(A)$, its subcategory of projective modules by $\Proj(A)$ and its subcategory of finitely generated projective modules by $\proj(A)$. Note that $A$ is right coherent if and only if $\fpmod(A)$ is an abelian category. By $K^b(\proj(A))$ (respectively, $K^-(\proj(A))$) we denote the (right) bounded homotopy category. We write $D(A)=D(\Mod(A))$ for the  derived category of all $A$-modules and $D^b(A)=D^b(\fpmod(A))$ for the bounded derived category of finitely presented $A$-modules.

All subcategories considered are strict and full. If $\mathcal{C}$ is a subcategory of an additive category $\mathcal{A}$, we define 
$\mathcal{C}^\perp$ to be the subcategory of $\mathcal{A}$ formed by all objects $Z$ with $\Hom_\mathcal{A}(C,Z)=0$ for all $C$ in $\mathcal{C}$. 
Dually, we define $^\perp\mathcal{C}$. 
If $\mathcal{T}$ is a triangulated category and $C$ an object in $\mathcal{T}$, we define the following perpendicular classes for any subset $I$ of $\mathbb{Z}$:
$$^{\perp_I}C:=\{Z\in \mathcal{T}\mid \Hom_\mathcal{T}(Z,C[i])=0 \text{ for all } i\in I\}$$
$$C^{\perp_I}:=\{Z\in \mathcal{T}\mid \Hom_\mathcal{T}(C,Z[i])=0 \text{ for all } i\in I\}$$

\subsection{t-structures.}
For a triangulated category $\mathcal{T}$ and a subcategory $\mathcal{C}$ of $\mathcal{T}$, we call a morphism $f\colon Z\longrightarrow C$ in $\mathcal{T}$ a {\bf left $\mathcal{C}$-approximation} of $Z$ provided that $C$ is in $\mathcal{C}$ and the induced map $\Hom_\mathcal{T}(C,C')\longrightarrow\Hom_\mathcal{T}(Z,C')$ is surjective for all $C'$ in $\mathcal{C}$. We call $f$ a {\bf minimal left $\mathcal{C}$-approximation}, if $f$ is a {\bf left-minimal} morphism, i.e. every morphism $g\colon C\longrightarrow C$ with $g\circ f=f$ is an isomorphism. Dually, we define (minimal) right approximations.

A pair of subcategories $(\mathcal{X},\mathcal{Y})$ in $\mathcal{T}$ is said to be a {\bf torsion pair}, if both $\mathcal{X}$ and $\mathcal{Y}$ are closed under direct summands, $\Hom_\mathcal{T}(X,Y)=0$ for all $X$ in $\mathcal{X}$ and $Y$ in $\mathcal{Y}$, and for all $Z$ in $\mathcal{T}$ there is a triangle 
\begin{equation*}
\xymatrix{X\ar[r] & Z \ar[r] & Y \ar[r] & X[1]}\tag{$\star$}
\end{equation*}
with $X$ in $\mathcal{X}$ and $Y$ in $\mathcal{Y}$. Moreover, a torsion pair $(\mathcal{X},\mathcal{Y})$ is called a {\bf t-structure} (respectively, {\bf co-t-structure}), if $\mathcal{X}$ is closed under positive (respectively, negative) shifts. We call $\mathcal{X}$ the {\bf aisle} and $\mathcal{Y}$ the {\bf coaisle} of the (co-)t-structure. 

For a t-structure $(\mathcal{X},\mathcal{Y})$ the morphism $X\longrightarrow Z$ in the triangle above is a minimal right $\mathcal{X}$-approximation, and the morphism $Z\longrightarrow Y$ is a minimal left $\mathcal{Y}$-approximation. These approximations are induced by the {\bf truncation functors} $\tau_\mathcal{X}\colon \mathcal{T}\longrightarrow \mathcal{X}$, which is right adjoint to the inclusion of $\mathcal{X}$ in $\mathcal{T}$, and $\tau^\mathcal{Y}\colon \mathcal{T}\longrightarrow \mathcal{Y}$, which is left adjoint to the inclusion of $\mathcal{Y}$ in $\mathcal{T}$. The triangle $(\star)$ then takes the form
$$\tau_{\mathcal{X}}Z\xymatrix{\ar[r] & Z \ar[r] & \tau^{\mathcal{Y}}Z \ar[r] & \tau_\mathcal{X}Z[1]}.$$
Moreover, for a t-structure $(\mathcal{X},\mathcal{Y})$ we denote by $\mathcal{H}:=\mathcal{X}\cap\mathcal{Y}[1]$ its {\bf heart}, which is well-known to be an abelian category. Its exact structure is given by the triangles in $\mathcal{T}$ with the first three terms in $\mathcal{H}$.
Combining the truncation functors, we get a cohomological functor
$H^0\colon\mathcal{T}\longrightarrow\mathcal{H}$
mapping an object $Z\in \mathcal{T}$ to 
$\tau^{\mathcal{Y}[1]} \tau_\mathcal{X}(Z)$, where $\tau^{\mathcal{Y}[1]}$ denotes the truncation functor with respect to the shifted t-structure $(\mathcal{X}[1],\mathcal{Y}[1])$.
The functor $H^0$ sends triangles in $\mathcal{T}$ to long exact sequences in $\mathcal{H}$.

If $\mathcal{T}$ is a triangulated category with coproducts, we say that an object $Z$ in $\mathcal{T}$ is {\bf compact}, if the functor $\Hom_\mathcal{T}(Z,-)$ commutes with coproducts. Moreover, a t-structure $(\mathcal{X},\mathcal{Y})$ in $\mathcal{T}$ is called {\bf compactly generated}, if $\mathcal{Y}=\mathcal{S}^\perp$ for a set $\mathcal{S}$ of compact objects in $\mathcal{T}$.

For $\mathcal{T}=D(A)$, we denote by $(D(A)^{\leq 0},D(A)^{\geq 1})$ the {\bf standard t-structure}, where $D(A)^{\leq 0}$ (respectively, $D(A)^{\geq 1}$) denotes the subcategory of complexes with non-zero cohomologies only in non-positive (respectively, positive) degrees. For $n\in\mathbb{Z}$, we define $D(A)^{\leq n}:=D(A)^{\leq 0}[-n]$ and $D(A)^{\geq n+1}:=D(A)^{\geq 1}[-n]$.
For the standard t-structure $(D(A)^{\leq 0},D(A)^{\geq 1})$ we denote the truncation functors by $\tau^{\leq 0}$ and $\tau^{\geq 1}$, for the shift of  the standard t-structure $(D(A)^{\leq n},D(A)^{\geq n+1})$ by $\tau^{\leq n}$ and $\tau^{\geq n+1}$.
A t-structure $(\mathcal{X},\mathcal{Y})$ in $D(A)$ is called {\bf intermediate}, if there are $m,n\in\mathbb{Z}$ such that $D(A)^{\leq m}\subseteq\mathcal{X}\subseteq D(A)^{\leq n}$, or equivalently, $D(A)^{\geq n+1}\subseteq\mathcal{Y}\subseteq D(A)^{\geq m+1}$.
Note that $D^b(A)$ is equipped with a standard t-structure as well, and we can adapt the previous notation. In particular, we call a t-structure $(\mathcal{X},\mathcal{Y})$ in $D^b(A)$ {\bf intermediate}, if there are $m,n\in\mathbb{Z}$ such that $D^b(A)^{\leq m}\subseteq\mathcal{X}\subseteq D^b(A)^{\leq n}$. If $A$ is a finite dimensional algebra, it can be checked that a t-structure $(\mathcal{X},\mathcal{Y})$ in $D^b(A)$ is intermediate if and only if it is {\bf bounded}, i.e.
$$\bigcup\limits_{n\in\mathbb{Z}}\mathcal{X}[n]=D^b(A)=\bigcup\limits_{n\in\mathbb{Z}}\mathcal{Y}[n].$$
\vspace{-11pt}

\subsection{Homotopy colimits.}
We will be interested in taking homotopy colimits in $D(A)$. For that we will follow the approach based on the theory of derivators, without actually defining all the corresponding notions. Instead, we will give a short overview of facts and ideas used to define homotopy colimits in $D(A)$.  
For a more detailed overview on homotopy colimits in triangulated categories and the use of derivators, we refer to \cite{Groth,Hr,SSV,S}.

Let $Cat$ be the
2-category of all small categories. For $I\in Cat$ we consider the category $\Mod(A)^{I}$, the category of $I$-shaped diagrams of $A$-modules. Objects of this category are functors $I\longrightarrow \Mod(A)$ and morphisms are natural transformations between these functors. The category $\Mod(A)^{I}$ is again abelian, so we can consider the unbounded derived category $D(\Mod(A)^{I})$. This category is the localization of the category of complexes $C(\Mod(A)^{I})\simeq C(\Mod(A))^{I}$ with respect to quasi-isomorphisms. The objects of $D(\Mod(A)^{I})$ are called {\bf coherent diagrams} and we think of them as $I$-shaped diagrams of complexes of $A$-modules.

A functor $u\colon I\longrightarrow J$ in $Cat$
yields a functor $u^*\colon \Mod(A)^{J} \longrightarrow \Mod(A)^{I}$ via precomposition. 
As $u^*$ is exact, it further induces a triangle functor $u^*\colon D(\Mod(A)^{J}) \longrightarrow D(\Mod(A)^{I})$.
Now let $\ast$ be the category with one object and one morphism. The category 
$D(\Mod(A)^{\ast})$ identifies with the category $D(A)$. For any $I\in Cat$ there exists a unique functor $\pi_I\colon I\longrightarrow \ast$.
The constant diagram functor $\pi_I^*\colon D(A) \longrightarrow D(\Mod(A)^{I})$
maps an object $Z\in D(A)$ to the constant diagram of shape $I$ with entries $Z$ and identity maps everywhere. We can view $\pi_I^*$ as a functor between the corresponding categories of complexes. In this case, its left (respectively, right) adjoint is the usual colimit (respectively, limit) functor in
the category of complexes.  The {\bf homotopy colimit} is defined to be the left derived functor of this colimit functor, and thus the functor left adjoint to $\pi_I^*$ on the level of derived categories.
$$
\xymatrix{ D(A)  \ar[rr]^{\pi_I^*}&&  \ar@/_2pc/[ll]_{\operatorname{hocolim}} D(\Mod(A)^{I})}
$$

We will mostly be interested in the case, when $I$ is a directed category. In this situation, we speak of a {\bf directed homotopy colimit} and write $\underrightarrow{\operatorname{hocolim}}_{i\in I}Z_i$, where $(Z_i\mid i\in I)$ denotes a coherent diagram. Since direct limits in the category of complexes of $A$-modules are exact, directed homotopy colimits can be computed as usual direct limits of complexes.

For a small category $I$ and an object $i\in I$ we can consider the functor $\iota_i\colon\ast \longrightarrow I$ sending $\ast$ to $i$, and the corresponding functor $\iota_i^*\colon D(\Mod(A)^{I}) \longrightarrow D(A)$. Evaluating $\iota_i^*$ on a diagram of complexes of $A$-modules  $(Z_i\mid i\in I)$ yields $Z_i$. Moreover, each morphism $f\colon i\longrightarrow j$ in $I$ gives rise to a natural transformation $f\colon \iota_i \Longrightarrow \iota_j$ and thus to a natural transformation   $f^*\colon \iota_i^* \Longrightarrow \iota_j^*$ (via precomposition). Evaluating $f^*$ on $(Z_i\mid i\in I)$ yields the corresponding morphism $Z_i\longrightarrow Z_j$. We can extend this assignment to define the diagram functor $\operatorname{dia}_I\colon D(\Mod(A)^{I}) \longrightarrow D(A)^{I}$. The objects of $D(A)^{I}$ are called {\bf incoherent diagrams}. 

Finally, for a subcategory $\mathcal{C}$ of $D(A)$, we denote by $\underrightarrow{\operatorname{hocolim}}(\mathcal{C})$ the smallest subcategory of $D(A)$ containing $\mathcal{C}$ and closed under directed homotopy colimits of objects from $\mathcal{C}$, that is closed under directed homotopy colimits of diagrams $Z=(Z_i\mid i\in I)$ in $D(\Mod(A)^{I})$ with $\iota_i^*(Z)=Z_i\in\mathcal{C}$ for all $i\in I$, for all directed categories $I\in Cat$.

\section{Lifting t-structures via homotopy colimits}\label{lifting}

In the proof of the next lemma we will use brutal truncations. For a complex $X=(X^i,d_i)$ with components $X^i$ and a differential $d_i$ of degree $1$ we denote the {\bf brutal truncation at degree $m$} by $\sigma^{\geq m}(X)$ (respectively $\sigma^{< m}(X)$), where $\sigma^{\geq m}(X)=(\sigma^{\geq m}(X)^i, d'_i)$ is the complex with $\sigma^{\geq m}(X)^i=X^i$ for $i\geq m$ and $0$ otherwise, and $d'_i=d_i$ for $i\geq m$ and $0$ otherwise. The truncation $\sigma^{< m}(X)$ is defined to be the complex $(\sigma^{< m}(X)^i, d'_i)$ with $\sigma^{< m}(X)^i=X^i$ for $i< m$ and $0$ otherwise, and $d'_i=d_i$ for $i< m-1$ and $0$ otherwise.

\begin{lemma}\label{comp.gen.}
Let $(\mathcal{X},\mathcal{Y})$ be an intermediate t-structure in $D^b(A)$. Then 
$$(\mathbf{X},\mathbf{Y}):= ({}^\perp(\mathcal{X}^\perp),\mathcal{X}^\perp)$$ 
is a compactly generated intermediate t-structure in $D(A)$ and
$$(\mathbf{X}\cap D^b(A),\mathbf{Y}\cap D^b(A))=(\mathcal{X},\mathcal{Y}).$$
\end{lemma}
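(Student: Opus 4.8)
The plan is to exhibit a set $\mathcal{S}$ of compact objects of $D(A)$, all lying in $\mathcal{X}$, such that $\mathbf{Y}=\mathcal{X}^\perp=\mathcal{S}^{\perp_{\mathbb{Z}_{\le 0}}}$; granting this, the existence result for t-structures generated by sets of objects (\cite{AJS,SS}) immediately gives that $(\mathbf{X},\mathbf{Y})=({}^\perp(\mathcal{X}^\perp),\mathcal{X}^\perp)$ is a t-structure, and it is compactly generated, by the set $\{S[i]\mid S\in\mathcal{S},\ i\ge 0\}$. Throughout I use that $\mathcal{X}$ is closed under positive shifts, so that $\mathcal{X}^\perp=\mathcal{X}^{\perp_{\mathbb{Z}_{\le 0}}}$, and that, $A$ being right coherent, $D^b(A)$ embeds fully faithfully into $D(A)$ with image the complexes of bounded finitely presented cohomology, so that all perpendicular classes can be computed in $D(A)$ and $D^b(A)^{\le \ell}=D^b(A)\cap D(A)^{\le\ell}$.

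Fix $m\le n$ with $D^b(A)^{\le m}\subseteq\mathcal{X}\subseteq D^b(A)^{\le n}$, and let $\mathcal{S}$ consist of all objects of $K^b(\proj(A))$ (up to isomorphism) that belong to $\mathcal{X}$; this is a representative set, its members are compact in $D(A)$, and $\mathcal{X}^\perp\subseteq\mathcal{S}^{\perp_{\mathbb{Z}_{\le 0}}}$ is then clear. For the reverse inclusion I would use brutal truncations. Given $X\in\mathcal{X}$, represent it by a complex $P^\bullet\in K^-(\proj(A))$ (possible since $A$ is coherent), and fix once and for all an integer $k\le m$. The brutal truncations sit in a short exact sequence of complexes $0\to\sigma^{\ge k}(P^\bullet)\to P^\bullet\to\sigma^{<k}(P^\bullet)\to 0$, hence a triangle
$$\sigma^{\ge k}(P^\bullet)\longrightarrow X\longrightarrow\sigma^{<k}(P^\bullet)\longrightarrow\sigma^{\ge k}(P^\bullet)[1]$$
in $D(A)$. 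Here $\sigma^{\ge k}(P^\bullet)$ is a bounded complex of finitely generated projectives, hence compact; and $\sigma^{<k}(P^\bullet)$ has bounded, finitely presented cohomology concentrated in degrees $\le k-1$ (the only cohomology not already a cohomology of $X$ is a cokernel of a map of finitely presented modules, hence finitely presented), so $\sigma^{<k}(P^\bullet)\in D^b(A)^{\le k-1}$ and $\sigma^{<k}(P^\bullet)[-1]\in D^b(A)^{\le k}\subseteq D^b(A)^{\le m}\subseteq\mathcal{X}$. Since the aisle $\mathcal{X}$ is closed under extensions, the triangle forces $\sigma^{\ge k}(P^\bullet)\in\mathcal{X}$, hence $\sigma^{\ge k}(P^\bullet)\in\mathcal{S}$.

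Now let $Z\in\mathcal{S}^{\perp_{\mathbb{Z}_{\le 0}}}$. Since $A[-m]\in D^b(A)^{\le m}\cap K^b(\proj(A))\subseteq\mathcal{S}$ and $\{A[-m]\}^{\perp_{\mathbb{Z}_{\le 0}}}=D(A)^{\ge m+1}$, we get $Z\in D(A)^{\ge m+1}$. Applying $\Hom_{D(A)}(-,Z)$ to the triangle above yields an exact sequence
$$\Hom_{D(A)}(\sigma^{<k}(P^\bullet),Z)\longrightarrow\Hom_{D(A)}(X,Z)\longrightarrow\Hom_{D(A)}(\sigma^{\ge k}(P^\bullet),Z),$$
whose right-hand term vanishes because $\sigma^{\ge k}(P^\bullet)\in\mathcal{S}$, and whose left-hand term vanishes by orthogonality in the standard t-structure, as $\sigma^{<k}(P^\bullet)\in D(A)^{\le k-1}\subseteq D(A)^{\le m-1}$ while $Z\in D(A)^{\ge m+1}$. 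Hence $\Hom_{D(A)}(X,Z)=0$ for every $X\in\mathcal{X}$, that is $Z\in\mathcal{X}^\perp$, and so $\mathcal{X}^\perp=\mathcal{S}^{\perp_{\mathbb{Z}_{\le 0}}}$; by \cite{AJS,SS} the pair $(\mathbf{X},\mathbf{Y})$ is a compactly generated t-structure. Intermediateness is then immediate: $\mathbf{Y}=\mathcal{X}^\perp\subseteq D(A)^{\ge m+1}$ by what was just shown, while $\mathcal{X}\subseteq D^b(A)^{\le n}\subseteq D(A)^{\le n}$ gives $D(A)^{\ge n+1}\subseteq\mathcal{X}^\perp=\mathbf{Y}$ again by orthogonality; equivalently $D(A)^{\le m}\subseteq\mathbf{X}\subseteq D(A)^{\le n}$.

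I expect the one genuinely delicate point to be the claim $\sigma^{\ge k}(P^\bullet)\in\mathcal{X}$, as opposed to the easy observation that it is compact. This is precisely where right coherence of $A$ is used, and it is used twice: once to guarantee that $X$ admits a resolution $P^\bullet\in K^-(\proj(A))$ by finitely generated projectives, and once to guarantee that the complementary brutal truncation $\sigma^{<k}(P^\bullet)$ still has finitely presented cohomology, so that it lands in $D^b(A)^{\le m}\subseteq\mathcal{X}$ and can be cancelled off using that the aisle is closed under extensions. Everything else is formal once the generation theorem of \cite{AJS,SS} is available.
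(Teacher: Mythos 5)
Your proposal is correct and follows essentially the same route as the paper: both produce a set of compact generators by brutally truncating a projective resolution of $X\in\mathcal{X}$ at a degree $\le m$, observing that the discarded tail lies in $D^b(A)^{\leq m}$ so that the truncation argument closes up against objects of $D(A)^{\geq m+1}$. The only cosmetic difference is that the paper first invokes skeletal smallness of $\mathcal{X}$ to get the t-structure and then checks $\mathcal{S}^\perp=\mathbf{Y}$ for $\mathcal{S}=\{A[t]\mid t\ge -m\}\cup\{\sigma^{\ge m}(X)\mid X\in\mathcal{X}\}$, whereas you take $\mathcal{S}=K^b(\proj(A))\cap\mathcal{X}$ and get existence and compact generation simultaneously from the generation theorem.
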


\begin{proof}
Since $\mathcal{X}$ is skeletally small, the pair $(\mathbf{X},\mathbf{Y})$ forms a t-structure in $D(A)$, where $\mathbf{X}$ is given by the smallest subcategory of $D(A)$ containing $\mathcal{X}$ and closed under extensions, coproducts and positive shifts (see \cite{AJS}). By construction, the t-structure $(\mathbf{X},\mathbf{Y})$ is again intermediate, i.e. there are $m,n\in\mathbb{Z}$ such that $D(A)^{\leq m}\subseteq\mathbf{X}\subseteq D(A)^{\leq n}$. This follows from the fact that taking the orthogonal in $D(A)$ yields $\left(D^b(A)^{\leq n}\right)^\perp=D(A)^{\geq  n+1}$. Now take $X$ in $\mathcal{X}$ and view $X$ as an object in $K^-(\proj(A))$ by taking its projective resolution. Using brutal truncation, we obtain a triangle
$$\xymatrix{\sigma^{< m}(X)[-1]\ar[r] & \sigma^{\geq m}(X) \ar[r] & X \ar[r] & \sigma^{< m}(X)},$$
where $\sigma^{\geq m}(X)$, viewed as an object in $D(A)$, is compact. Define $\mathcal{S}:=\mathcal{S}_1\cup\mathcal{S}_2$ to be the set of compact objects given by $\mathcal{S}_1:=\{A[t]\mid t\geq -m\}$ and $\mathcal{S}_2:=\{\sigma^{\geq m}(X)\mid X\in \mathcal{X}\}$. We claim that $\mathcal{S}^\perp=\mathbf{Y}$, showing that $(\mathbf{X},\mathbf{Y})$ is a compactly generated t-structure. 

First, note that $\mathbf{Y}\subseteq D(A)^{\geq m+1}=\mathcal{S}_1^\perp$. Hence, it suffices to observe that for an object $Z$ in $\mathcal{S}_1^\perp$, we have
$$Z\in\mathcal{S}_2^\perp\Leftrightarrow Z\in\mathbf{Y}=\mathcal{X}^\perp.$$
Applying the functor $\Hom_{D(A)}(-,Z)$ to the triangle above and using the fact that both $\Hom_{D(A)}(\sigma^{< m}(X)[-1],Z)$ and $\Hom_{D(A)}(\sigma^{< m}(X),Z)$ are zero, we get the claim.
\end{proof}

We call the t-structure obtained in Lemma \ref{comp.gen.} the {\bf lifted t-structure associated to $(\mathcal{X},\mathcal{Y})$}. We want to better understand the t-structures in $D(A)$ that we obtain via lifting. Recall that an object $X$ in a Grothendieck category $\mathbf{H}$ is said to be {\bf finitely presented} provided that $\Hom_\mathbf{H}(X,-)$ commutes with direct limits. Moreover, $\mathbf{H}$ is called {\bf locally coherent}, if it has a set of finitely presented generators and if the finitely presented objects form an exact abelian subcategory.

\begin{theorem}\label{thm lifting}
Let $(\mathcal{X},\mathcal{Y})$ be an intermediate t-structure in $D^b(A)$ with heart $\mathcal{H}$, and let $(\mathbf{X},\mathbf{Y})$ be the associated lifted t-structure in $D(A)$ with heart $\mathbf{H}$. Then the following holds. 

\begin{enumerate}
	\item $(\mathbf{X},\mathbf{Y})=(\underrightarrow{\operatorname{hocolim}}(\mathcal{X}),\underrightarrow{\operatorname{hocolim}}(\mathcal{Y}))$.
	\item $\mathbf{H}=\underrightarrow{\operatorname{hocolim}}(\mathcal{H})$, and $\mathbf{H}$ is a locally coherent Grothendieck category, whose finitely presented objects are given by $\mathcal{H}=\mathbf{H}\cap D^b(A)$. 
\end{enumerate}
\end{theorem}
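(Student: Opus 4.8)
The plan is to prove the two parts in sequence, using Lemma \ref{comp.gen.} together with the characterization of $(\mathbf{X},\mathbf{Y})$ as the smallest t-structure containing $(\mathcal{X},\mathcal{Y})$ and closed under coproducts.

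\medskip

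\textbf{Part (1).} First I would show $\underrightarrow{\operatorname{hocolim}}(\mathcal{X}) \subseteq \mathbf{X}$ and $\underrightarrow{\operatorname{hocolim}}(\mathcal{Y}) \subseteq \mathbf{Y}$. The key observation is that an aisle of a compactly generated t-structure is closed under all coproducts, hence under directed homotopy colimits: since in $D(A)$ a directed homotopy colimit $\underrightarrow{\operatorname{hocolim}}_{i\in I} Z_i$ fits into a triangle $\coprod_i Z_i \to \coprod_i Z_i \to \underrightarrow{\operatorname{hocolim}}_i Z_i \to$ (the telescope/Milnor-type triangle, valid for directed $I$), closure under coproducts and extensions gives closure under directed homotopy colimits. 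As $\mathbf{X}$ contains $\mathcal{X}$ and is an aisle closed under coproducts and extensions, $\underrightarrow{\operatorname{hocolim}}(\mathcal{X}) \subseteq \mathbf{X}$. For the coaisle: $\mathbf{Y} = \mathcal{X}^\perp = \mathcal{S}^\perp$ with $\mathcal{S}$ compact, and $\mathcal{Y} = (\mathcal{X}^\perp)\cap D^b(A) \subseteq \mathbf{Y}$; since $\mathcal{S}$ is a set of compact objects, $\mathcal{S}^\perp$ is closed under coproducts and hence (again via the telescope triangle) under directed homotopy colimits, so $\underrightarrow{\operatorname{hocolim}}(\mathcal{Y}) \subseteq \mathbf{Y}$. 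For the reverse inclusions, it suffices to show that $(\underrightarrow{\operatorname{hocolim}}(\mathcal{X}), \underrightarrow{\operatorname{hocolim}}(\mathcal{Y}))$ is already a t-structure: the orthogonality $\Hom(X,Y)=0$ holds since $\Hom(-,Y)$ and $\Hom(X,-)$ turn directed homotopy colimits into appropriate (co)limits of $\mathrm{Hom}$-groups that vanish termwise; the shift conditions are clear; and for the existence of truncation triangles, one writes an arbitrary object in $\mathbf{X}$ (resp. $\mathbf{Y}$) as a directed homotopy colimit of objects of $\mathcal{X}$ (resp. $\mathcal{Y}$). The cleanest route here is to cite \cite{SSV}: every object of $\mathbf{X}$ is a directed homotopy colimit of objects in $D^b(A)$ lying in $\mathbf{X}$, i.e. in $\mathcal{X}$ (using that $\mathbf{X}\cap D^b(A) = \mathcal{X}$, which follows from $\mathbf{Y}\cap D^b(A) = \mathcal{Y}$ and the fact that restriction of the lifted t-structure agrees with $(\mathcal{X},\mathcal{Y})$ on bounded objects — this last point uses intermediateness so that bounded objects are genuinely truncated inside $D^b(A)$). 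This gives $\mathbf{X} \subseteq \underrightarrow{\operatorname{hocolim}}(\mathcal{X})$ and dually for $\mathbf{Y}$.

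\medskip

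\textbf{Part (2).} Once (1) is known, $\mathbf{H} = \mathbf{X} \cap \mathbf{Y}[1]$. That $\underrightarrow{\operatorname{hocolim}}(\mathcal{H}) \subseteq \mathbf{H}$ is immediate since $\mathbf{H}$ is the heart, closed under directed homotopy colimits (the cohomological functor $H^0\colon D(A)\to\mathbf{H}$ commutes with directed homotopy colimits because truncations do, being built from compactly generated t-structures). For the reverse inclusion, take $Z\in\mathbf{H}$; by the \cite{SSV} argument $Z = \underrightarrow{\operatorname{hocolim}}_i Z_i$ with $Z_i \in D^b(A)$, and applying $H^0$ (which fixes $Z$ and sends each $Z_i$ into $\mathcal{H} = \mathbf{H}\cap D^b(A)$) rewrites $Z$ as a directed homotopy colimit of objects of $\mathcal{H}$. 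Next, $\mathbf{H}$ is a Grothendieck category: this is the general statement that the heart of a compactly generated t-structure in a compactly generated triangulated category is a Grothendieck category with a generator, which I would cite (e.g. from the work on compactly generated / intermediate t-structures). To identify the finitely presented objects, I would argue that the restricted Yoneda-type functor identifies $\mathbf{H}$ with a category of modules/sheaves over the "ring with several objects" $\mathcal{H}$, so that $\mathbf{H} \simeq \mathrm{Lex}(\mathcal{H}^{op},\mathrm{Ab})$, the category of left exact functors; since $\mathcal{H}$ is a skeletally small abelian category (it is the heart of a t-structure on $D^b(A)$ and consists of bounded complexes of finitely presented modules, hence is essentially small), $\mathbf{H}$ is then locally coherent with finitely presented objects precisely $\mathcal{H}$, by the standard description of locally coherent categories as categories of left exact functors on small abelian categories. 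Here I would need the compatibility that $H^0$ restricted to $D^b(A)$ lands in $\mathcal{H}$ and that $\mathcal{H}$, viewed inside $\mathbf{H}$, consists exactly of the finitely presented objects — the nontrivial direction being that every finitely presented object of $\mathbf{H}$ already lies in $\mathcal{H}$, which follows from (2)'s first assertion: a finitely presented object is a directed homotopy colimit of objects of $\mathcal{H}$ and hence, by the definition of finite presentation and a retract argument, a direct summand of one of them, so itself in $\mathcal{H}$ (as $\mathcal{H}$ is closed under summands).

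\medskip

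\textbf{Main obstacle.} I expect the technical heart of the argument to be the \cite{SSV}-type claim that every object of $\mathbf{X}$ (equivalently $\mathbf{H}$) is a \emph{directed} homotopy colimit of objects of $\mathcal{X}$ (resp. $\mathcal{H}$), with the directed diagram living coherently in $D(\Mod(A)^I)$ — controlling coherence (not just incoherent term-wise data) while keeping the diagram directed and the terms bounded is exactly the subtle point, and it is where intermediateness and coherence of $A$ are used. The identification $\mathbf{H}\simeq\mathrm{Lex}(\mathcal{H}^{op},\mathrm{Ab})$ and the resulting local coherence is then a formal consequence, but checking that the fp-objects are precisely $\mathcal{H}$ — rather than merely contained in or containing it — requires the retract argument above and a verification that $\mathcal{H}\subseteq\mathbf{H}$ really does consist of fp-objects, i.e. that $\Hom_{\mathbf{H}}(H,-)$ commutes with direct limits for $H\in\mathcal{H}$; this uses compactness of the objects $\sigma^{\geq m}(X)$ from Lemma \ref{comp.gen.} and the translation between $\mathrm{Hom}$ in $\mathbf{H}$ and $\mathrm{Hom}$ in $D(A)$ via truncations.
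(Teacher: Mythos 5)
Your outline reproduces the easy half of the argument correctly (closure of $\mathbf{X}$ under homotopy colimits, closure of $\mathbf{Y}=\mathcal{S}^\perp$ under directed homotopy colimits via compactness of $\mathcal{S}$), and your identification of the ``main obstacle'' is accurate --- but that obstacle is exactly the content of the theorem, and you do not resolve it. The statement you propose to cite from \cite{SSV}, namely that every object of $\mathbf{X}$ (resp.\ $\mathbf{Y}$, resp.\ $\mathbf{H}$) is a directed homotopy colimit of objects of $\mathcal{X}$ (resp.\ $\mathcal{Y}$, resp.\ $\mathcal{H}$), is not available there as a citation; what \cite[Theorem A]{SSV} provides is that a t-structure in $D(A)$ lifts to a t-structure on each diagram category $D(\Mod(A)^I)$ whose (co)aisle consists of the coherent diagrams with entries in the original (co)aisle. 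The missing argument runs as follows: given $Z\in D(A)$, write $Z\cong\varinjlim_{i\in I}Z_i$ in $C(\Mod(A))$ with the $Z_i$ bounded complexes of finitely presented modules (this uses coherence of $A$ and the references \cite{GR,CH}); view $(Z_i\mid i\in I)$ as a coherent diagram, truncate it with respect to the lifted t-structure $(\mathbf{X}_I,\mathbf{Y}_I)$ on $D(\Mod(A)^I)$, and apply $\iota_i^*$ to see that the componentwise triangles $X_i\to Z_i\to Y_i\to X_i[1]$ are the $(\mathbf{X},\mathbf{Y})$-truncation triangles of the $Z_i$; since $Z_i\in D^b(A)$ and $\mathcal{X}\subseteq\mathbf{X}$, $\mathcal{Y}\subseteq\mathbf{Y}$, uniqueness of truncation triangles forces $X_i\in\mathcal{X}$ and $Y_i\in\mathcal{Y}$; finally $\underrightarrow{\operatorname{hocolim}}$ applied to the coherent truncation triangle returns the truncation triangle of $Z$. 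Without this coherent-diagram truncation step your proof of the inclusions $\mathbf{X}\subseteq\underrightarrow{\operatorname{hocolim}}(\mathcal{X})$ and $\mathbf{Y}\subseteq\underrightarrow{\operatorname{hocolim}}(\mathcal{Y})$, and hence of part (1), is incomplete. Note in particular that one cannot simply write $X\in\mathbf{X}$ as a directed homotopy colimit of bounded complexes \emph{already lying in} $\mathbf{X}$ --- a priori the bounded approximants need not lie in $\mathbf{X}$, which is precisely why one truncates the whole coherent diagram rather than its colimit.

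For part (2), your reduction $\mathbf{H}=\underrightarrow{\operatorname{hocolim}}(\mathcal{H})$ via commutation of $H^0$ with directed homotopy colimits matches the paper, as does the retract argument showing that every finitely presented object of $\mathbf{H}$ lies in $\mathcal{H}$. However, the converse --- that each $H\in\mathcal{H}$ is finitely presented in $\mathbf{H}$ --- and the proposed identification $\mathbf{H}\simeq\mathrm{Lex}(\mathcal{H}^{op},\mathrm{Ab})$ are only asserted; the latter is essentially equivalent to what is to be proved. The paper instead quotes \cite[Proposition 4.5]{Sa}, whose hypothesis reduces to checking that $\Hom_{D(A)}(A[k],-)$ commutes with direct limits in $\mathbf{H}$ (computed as directed homotopy colimits by \cite{SSV}), which holds by compactness of $A[k]$. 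So the overall architecture of your proposal is right and follows the same route as the paper, but the two load-bearing steps are named rather than carried out.
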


\begin{proof}
$(1)$: Every aisle of a t-structure in $D(A)$ is closed under homotopy colimits by \cite[Proposition 4.2]{SSV}. Moreover, since $(\mathbf{X},\mathbf{Y})$ is compactly generated by Lemma \ref{comp.gen.}, the coaisle $\mathbf{Y}$ is closed under directed homotopy colimits (see \cite[Proposition 5.6]{SSV}). Indeed, if $S$ is a compact object in $D(A)$ and $(Z_i\mid i\in I)$ is a directed coherent diagram, we have the canonical isomorphism
$$\Hom_{D(A)}(S,\underrightarrow{\operatorname{hocolim}}_{i\in I}Z_i)\cong\varinjlim {}_{i\in I}\Hom_{D(A)}(S,Z_i).$$
It follows that $\underrightarrow{\operatorname{hocolim}}(\mathcal{X})\subseteq\mathbf{X}$ and $\underrightarrow{\operatorname{hocolim}}(\mathcal{Y})\subseteq\mathbf{Y}$. Now take an object $Z$ in $D(A)$ and consider the approximation triangle
$$\xymatrix{X\ar[r] & Z \ar[r] & Y \ar[r] & X[1]}$$
with $X$ in $\mathbf{X}$ and $Y$ in $\mathbf{Y}$. We will show that $X$ is in $\underrightarrow{\operatorname{hocolim}}(\mathcal{X})$ and $Y$ is in $\underrightarrow{\operatorname{hocolim}}(\mathcal{Y})$, proving that $(\mathbf{X},\mathbf{Y})=(\underrightarrow{\operatorname{hocolim}}(\mathcal{X}),\underrightarrow{\operatorname{hocolim}}(\mathcal{Y}))$. 
To this end, write $Z\cong\varinjlim_{i\in I}Z_i$ in $C(\Mod(A))$, where $(Z_i\mid i\in I)\in C(\Mod(A))^I$ is a direct system of bounded complexes of finitely presented modules (see \cite[Lemmas 4.1.1(ii) and 5.1.1]{GR} for the existence of such a diagram, and \cite[Section 4]{CH} for further context). We can view $(Z_i\mid i\in I)$ as an object in $D(\Mod(A)^I)$ and get $Z\cong\underrightarrow{\operatorname{hocolim}}_{i\in I}Z_i$. By \cite[Theorem A]{SSV}, there is a t-structure $(\mathbf{X}_I,\mathbf{Y}_I)$ in $D(\Mod(A)^I)$, where $\mathbf{X}_I$, respectively $\mathbf{Y}_I$, is given by all $I$-shaped diagrams of complexes in $\mathbf{X}$, respectively $\mathbf{Y}$. Consider the associated approximation triangle 
$$\xymatrix{(X_i\mid i\in I)\ar[r] & (Z_i\mid i\in I) \ar[r] & (Y_i\mid i\in I) \ar[r] & (X_i\mid i\in I)[1]}$$
in $D(\Mod(A)^I)$. Applying the functors $\iota_i^*$, we obtain triangles of the form
$$\xymatrix{X_i\ar[r] & Z_i \ar[r] & Y_i \ar[r] & X_i[1]}$$
in $D(A)$. Since all the $Z_i$ belong to $D^b(A)$, so do all the $X_i$ and $Y_i$. In fact, we have found the approximation triangles of $Z_i$ with respect to the t-structure $(\mathcal{X},\mathcal{Y})$ in $D^b(A)$. Taking the homotopy colimit of the triangle in $D(\Mod(A)^I)$ yields a triangle
$$\xymatrix{\underrightarrow{\operatorname{hocolim}}_{i\in I} X_i\ar[r] & \underrightarrow{\operatorname{hocolim}}_{i\in I} Z_i \ar[r] & \underrightarrow{\operatorname{hocolim}}_{i\in I} Y_i \ar[r] & \underrightarrow{\operatorname{hocolim}}_{i\in I} X_i[1]}$$
in $D(A)$ isomorphic to 
$$\xymatrix{X\ar[r] & Z \ar[r] & Y \ar[r] & X[1]}.$$
It follows that $X$ belongs to $\underrightarrow{\operatorname{hocolim}}(\mathcal{X})$ and $Y$ belongs to $\underrightarrow{\operatorname{hocolim}}(\mathcal{Y})$, as desired.
Observe that the previous arguments applied to an arbitrary directed coherent diagram imply that truncation functors of compactly generated t-structures always commute with directed homotopy colimits.

$(2)$: We know that $\underrightarrow{\operatorname{hocolim}}(\mathcal{H})\subseteq\mathbf{H}$ by $(1)$. Conversely, take an object $H$ in $\mathbf{H}$ and write again $H\cong\underrightarrow{\operatorname{hocolim}}_{i\in I}Z_i$ with $Z_i$ in $D^b(A)$ and $I$ directed. By $H^0\colon D(A)\longrightarrow \mathbf{H}$ we denote the cohomological functor associated to the t-structure $(\mathbf{X},\mathbf{Y})$. Note that on objects in $D^b(A)$ the functor $H^0$ coincides with the cohomological functor associated to $(\mathcal{X},\mathcal{Y})$. 
Since $H^0$ is the composition of truncation functors, it commutes with directed homotopy colimits following $(1)$. Hence, we get 
$$H=H^0(H)\cong H^0(\underrightarrow{\operatorname{hocolim}}_{i\in I}Z_i)\cong\underrightarrow{\operatorname{hocolim}}_{i\in I}H^0(Z_i)$$
with $H^0(Z_i)$ in $\mathcal{H}$, as desired. 

The heart $\mathbf{H}$ of $(\mathbf{X},\mathbf{Y})$ is a Grothendieck category by \cite[Corollary 4.10]{AMV3} (see also \cite{SSt,SSV} for more general results in this direction). 
By \cite[Theorem A and Corollary 5.8]{SSV} we can lift incoherent diagrams in $\mathbf{H}$ to coherent diagrams and compute direct limits in $\mathbf{H}$ as directed homotopy colimits in $D(A)$.
Thus, it only remains to show that the set of finitely presented objects in $\mathbf{H}$ is given by $\mathcal{H}=\mathbf{H}\cap D^b(A)$. 
This follows from \cite[Proposition 4.5]{Sa}. 
In order to apply \cite[Proposition 4.5]{Sa} we only need to check that for each direct system $(H_i\mid i\in I)$ in $\mathbf{H}$ and for each $k \in \mathbb{Z}$,
 the canonical map $\varinjlim_{i\in I}\Hom_{D(A)}(A[k], H_i) \longrightarrow \Hom_{D(A)}(A[k], \varinjlim_{i\in I}H_i)$
 is an isomorphism. After lifting $(H_i\mid i\in I)$ to a coherent diagram, this follows from the fact that $\Hom_{D(A)}(A[k], -)$ is naturally isomorphic to the $-k$-th cohomology functor on complexes, which preserves direct limits.
\end{proof}

We have seen so far that every intermediate t-structure $(\mathcal{X},\mathcal{Y})$ in $D^b(A)$ lifts to an intermediate and compactly generated t-structure $(\underrightarrow{\operatorname{hocolim}}(\mathcal{X}),\underrightarrow{\operatorname{hocolim}}(\mathcal{Y}))$ in $D(A)$, and that this lifting process is compatible with the heart.
Next, we would like to better understand which t-structures in $D(A)$ can be obtained in this way.

\section{Restricting t-structures}\label{restricting}
We say that a t-structure $(\mathbf{X},\mathbf{Y})$ in $D(A)$ {\bf restricts to $D^b(A)$}, if $(\mathbf{X}\cap D^b(A),\mathbf{Y}\cap D^b(A))$ is a t-structure in $D^b(A)$. Clearly, every t-structure in $D(A)$ obtained via lifting restricts to the initial t-structure in $D^b(A)$.

Moreover, we call a t-structure $(\mathbf{X},\mathbf{Y})$ in $D(A)$ {\bf homotopically smashing}, if the coaisle $\mathbf{Y}$ is closed under directed homotopy colimits. As discussed in the proof of Theorem \ref{thm lifting}, every compactly generated t-structure is homotopically smashing.

\begin{theorem}\label{thm rest}
Let $(\mathbf{X},\mathbf{Y})$ be an intermediate, homotopically smashing t-structure in $D(A)$ with heart $\mathbf{H}$. Then the following are equivalent.
\begin{enumerate}
\item $(\mathbf{X},\mathbf{Y})$ restricts to $D^b(A)$;
\item $\mathbf{Y}=\underrightarrow{\operatorname{hocolim}}(\mathbf{Y}\cap D^b(A))$;
\item $\mathbf{H}=\underrightarrow{\operatorname{hocolim}}(\mathbf{H}\cap D^b(A))$.
\end{enumerate}
In particular, if these conditions are fulfilled, we also have $\mathbf{X}=\underrightarrow{\operatorname{hocolim}}(\mathbf{X}\cap D^b(A))$.
\end{theorem}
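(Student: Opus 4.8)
The plan is to prove the cycle of implications $(1)\Rightarrow(2)\Rightarrow(3)\Rightarrow(1)$, together with the final assertion. First I would establish $(1)\Rightarrow(2)$: assume $(\mathcal{X},\mathcal{Y}):=(\mathbf{X}\cap D^b(A),\mathbf{Y}\cap D^b(A))$ is a t-structure in $D^b(A)$. It is automatically intermediate since $(\mathbf{X},\mathbf{Y})$ is. By Lemma \ref{comp.gen.} the lifted t-structure $({}^\perp(\mathcal{X}^\perp),\mathcal{X}^\perp)$ is compactly generated and intermediate, and by Theorem \ref{thm lifting}(1) its coaisle equals $\underrightarrow{\operatorname{hocolim}}(\mathcal{Y})=\underrightarrow{\operatorname{hocolim}}(\mathbf{Y}\cap D^b(A))$. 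So it suffices to check that this lifted t-structure coincides with $(\mathbf{X},\mathbf{Y})$. One inclusion is clear: $\underrightarrow{\operatorname{hocolim}}(\mathcal{Y})\subseteq\mathbf{Y}$ because $\mathcal{Y}\subseteq\mathbf{Y}$ and $\mathbf{Y}$ is closed under directed homotopy colimits by the homotopically smashing hypothesis. For the reverse inclusion, since both $(\underrightarrow{\operatorname{hocolim}}(\mathcal{X}),\underrightarrow{\operatorname{hocolim}}(\mathcal{Y}))$ and $(\mathbf{X},\mathbf{Y})$ are t-structures and $\underrightarrow{\operatorname{hocolim}}(\mathcal{Y})\subseteq\mathbf{Y}$, the aisles satisfy $\mathbf{X}={}^\perp\mathbf{Y}\subseteq{}^\perp\underrightarrow{\operatorname{hocolim}}(\mathcal{Y})=\underrightarrow{\operatorname{hocolim}}(\mathcal{X})$; combined with $\underrightarrow{\operatorname{hocolim}}(\mathcal{X})\subseteq\mathbf{X}$ (same argument, using that aisles are closed under homotopy colimits by \cite[Proposition 4.2]{SSV}) we get $\mathbf{X}=\underrightarrow{\operatorname{hocolim}}(\mathcal{X})$, hence $\mathbf{Y}=\mathbf{X}^{\perp_0}\cap(\text{shifts})=\underrightarrow{\operatorname{hocolim}}(\mathcal{Y})$ as well, since two t-structures with the same aisle are equal. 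This simultaneously yields the final assertion $\mathbf{X}=\underrightarrow{\operatorname{hocolim}}(\mathbf{X}\cap D^b(A))$ once we know $(1)$.

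Next, $(2)\Rightarrow(3)$: since $(\mathbf{X},\mathbf{Y})$ is homotopically smashing, the truncation functor $\tau_{\mathbf{Y}}$, hence the cohomological functor $H^0=\tau_{\mathbf{Y}[1]}\tau_{\mathbf{X}}$, commutes with directed homotopy colimits — this follows by repeating the diagram-category argument from the proof of Theorem \ref{thm lifting}(1), which only used the homotopically smashing property (every aisle is closed under homotopy colimits, and the coaisle is here by hypothesis). Now $\mathbf{H}\cap D^b(A)=H^0(D^b(A))$ up to the obvious identifications, and given $H\in\mathbf{H}$ write $H\cong\underrightarrow{\operatorname{hocolim}}_{i\in I}Z_i$ with $Z_i\in D^b(A)$ (as in the proof of Theorem \ref{thm lifting}, via \cite[Lemmas 4.1.1(ii) and 5.1.1]{GR}). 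Applying $H^0$ and using that it commutes with directed homotopy colimits gives $H\cong\underrightarrow{\operatorname{hocolim}}_{i\in I}H^0(Z_i)$ with $H^0(Z_i)\in\mathbf{H}\cap D^b(A)$; hence $\mathbf{H}\subseteq\underrightarrow{\operatorname{hocolim}}(\mathbf{H}\cap D^b(A))$, and the reverse inclusion holds because $\mathbf{H}=\mathbf{X}\cap\mathbf{Y}[1]$ is an intersection of classes closed under directed homotopy colimits. I should note here that $(2)$ is only needed to guarantee (via the $(1)\Leftrightarrow(2)$ loop) that $\mathbf{H}\cap D^b(A)$ is large enough; actually the argument as written shows $(3)$ already from the homotopically smashing hypothesis alone, so the real content is that $(3)$ is automatic — the point is that the equivalence packages this together with $(1)$.

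Finally, $(3)\Rightarrow(1)$: this is the main obstacle. I must show that $(\mathcal{X},\mathcal{Y}):=(\mathbf{X}\cap D^b(A),\mathbf{Y}\cap D^b(A))$ is a t-structure in $D^b(A)$. The orthogonality $\Hom(\mathcal{X},\mathcal{Y})=0$ and closure under summands and shifts are inherited from $(\mathbf{X},\mathbf{Y})$; the difficulty is the existence of approximation triangles \emph{inside} $D^b(A)$, i.e. that for $Z\in D^b(A)$ the triangle $\tau_{\mathbf{X}}Z\to Z\to\tau_{\mathbf{Y}}Z\to$ has both outer terms in $D^b(A)$. Since $(\mathbf{X},\mathbf{Y})$ is intermediate, $\tau_{\mathbf{X}}Z$ and $\tau_{\mathbf{Y}}Z$ lie in a bounded range of cohomological degrees, so it suffices to show each cohomology of $\tau_{\mathbf{Y}}Z$ (equivalently, each $H^0$ of a shift) is finitely presented; equivalently it suffices to show that $H^0(D^b(A))\subseteq D^b(A)$, i.e. that $\mathbf{H}\cap D^b(A)$ is closed under the relevant operations and that $H^0$ does not leave $D^b(A)$ on bounded complexes. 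The plan is to use hypothesis $(3)$ to identify $\mathbf{H}$ as $\underrightarrow{\operatorname{hocolim}}(\mathbf{H}\cap D^b(A))$ and then invoke the lifting machinery in reverse: the pair $(\mathbf{X},\mathbf{Y})$ has the same aisle as the lifted t-structure associated to the candidate $(\mathcal{X},\mathcal{Y})$ — provided the latter is a t-structure. To break this circularity I would argue that $(3)$ forces $H^0$ to send $D^b(A)$ into $D^b(A)$: for $Z\in D^b(A)$, $H^0(Z)\in\mathbf{H}$ is finitely presented in $\mathbf{H}$ because $\Hom_{\mathbf{H}}(H^0(Z),-)\cong\Hom_{D(A)}(Z,-)|_{\mathbf{H}}$ commutes with direct limits (as $Z$ is a compact object of $D(A)$ and direct limits in $\mathbf{H}$ are directed homotopy colimits by \cite[Corollary 5.8]{SSV}), and by $(3)$ together with \cite[Proposition 4.5]{Sa} the finitely presented objects of $\mathbf{H}$ are exactly $\mathbf{H}\cap D^b(A)$; hence $H^0(Z)\in D^b(A)$. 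Running this over all shifts and using boundedness of the t-structure produces the approximation triangle in $D^b(A)$, completing the proof.
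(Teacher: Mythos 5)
Your step $(2)\Rightarrow(3)$ contains a genuine error that breaks the cycle. You assert that $H^0(Z_i)\in\mathbf{H}\cap D^b(A)$ for $Z_i\in D^b(A)$, and you even conclude that $(3)$ ``holds automatically from the homotopically smashing hypothesis alone''. This cannot be right: for a general intermediate, homotopically smashing t-structure there is no reason why $H^0$ should send an object of $D^b(A)$ back into $D^b(A)$ --- that is essentially the statement that the t-structure restricts, i.e.\ the conclusion $(1)$. If $(3)$ were automatic, then together with $(3)\Rightarrow(1)$ every intermediate compactly generated t-structure would restrict to $D^b(A)$, contradicting Example \ref{Kronecker}. (The analogous step in the proof of Theorem \ref{thm lifting}(2) is legitimate only because there the t-structure is lifted from $D^b(A)$, so $H^0$ restricted to $D^b(A)$ agrees with the cohomological functor of $(\mathcal{X},\mathcal{Y})$ and lands in $\mathcal{H}\subseteq D^b(A)$.) The repair is to prove $(2)\Rightarrow(1)$ directly: for compact $Z$, write $\tau_{\mathbf{Y}}Z=\underrightarrow{\operatorname{hocolim}}_{i}Y_i$ with $Y_i\in\mathbf{Y}\cap D^b(A)$, factor the minimal left $\mathbf{Y}$-approximation through some $Y_i$ and deduce that $\tau_{\mathbf{Y}}Z$ is a direct summand of $Y_i$; for arbitrary $Z\in D^b(A)$ reduce to the compact case via brutal truncation, using intermediacy to see $\tau_{\mathbf{Y}}Z\simeq\tau_{\mathbf{Y}}\sigma^{\geq -i}(Z)$ for $i\gg 0$. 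Then $(3)$ follows from $(1)$ as in your first step.

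Your $(3)\Rightarrow(1)$ has a workable shape (show that each cohomology $H^i(Z)$ of $Z\in D^b(A)$ lies in $D^b(A)$ and reassemble $\tau_{\mathbf{Y}}Z$ from finitely many of them), but two steps are unjustified as written. First, an object of $D^b(A)$ is compact in $D(A)$ only if it is perfect, so ``$Z$ is a compact object of $D(A)$'' fails in general; moreover the isomorphism $\Hom_{\mathbf{H}}(H^0(Z),-)\cong\Hom_{D(A)}(Z,-)|_{\mathbf{H}}$ requires $Z$ to lie in the aisle (it fails, e.g., for $Z=H'[-1]$ with $H'\in\mathbf{H}$). Both points must be handled by shifting into $\mathbf{X}$ and replacing $Z$ by a brutal truncation $\sigma^{\geq -i}(Z)$ that, for $i\gg0$, lies in $\mathbf{X}$ and has the same relevant $\mathbf{Y}$-truncation. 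Second, the appeal to \cite[Proposition 4.5]{Sa} to identify the finitely presented objects of $\mathbf{H}$ with $\mathbf{H}\cap D^b(A)$ is not available at this stage; what $(3)$ actually gives you --- via \cite[Corollary 5.8]{SSV} and idempotent completeness of $D(A)$ --- is that a finitely presented object of $\mathbf{H}$ is a direct summand of a term of a directed system in $\mathbf{H}\cap D^b(A)$, hence lies in $D^b(A)$. The paper sidesteps this detour by factoring the minimal approximation of a compact object through a term of the colimit and then running an induction on the number of shifts needed to place $Z$ in the aisle; you should either adopt that argument or spell out the missing steps above.
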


\begin{proof}
$(1)\Rightarrow (2),(3)$: By lifting again, we obtain the t-structure
$$(\underrightarrow{\operatorname{hocolim}}(\mathbf{X}\cap D^b(A)),\underrightarrow{\operatorname{hocolim}}(\mathbf{Y}\cap D^b(A)))$$
in $D(A)$, which coincides with $(\mathbf{X},\mathbf{Y})$, as both $\mathbf{X}$ and $\mathbf{Y}$ are closed under directed homotopy colimits, and thus contain the respective classes $\underrightarrow{\operatorname{hocolim}}(\mathbf{X}\cap D^b(A))$ and $\underrightarrow{\operatorname{hocolim}}(\mathbf{Y}\cap D^b(A))$. Hence, we get $(2)$, and also $(3)$ by Theorem \ref{thm lifting}(2).

$(2)\Rightarrow (1)$: Now assume that $\mathbf{Y}=\underrightarrow{\operatorname{hocolim}}(\mathbf{Y}\cap D^b(A))$. We show that for an object $Z$ in $D^b(A)$ the approximation triangle 
$$\xymatrix{X\ar[r] & Z \ar[r]^f & Y \ar[r] & X[1]}$$
with respect to $(\mathbf{X},\mathbf{Y})$ lies in $D^b(A)$. To this end, we first consider the case when $Z$ is a compact object. By assumption, we can write $Y=\underrightarrow{\operatorname{hocolim}}_{i\in I}Y_i$ with $Y_i$ in $\mathbf{Y}\cap D^b(A)$ for all $i$. Since $Z$ is compact, we have
$$f\in\Hom_{D(A)}(Z,\underrightarrow{\operatorname{hocolim}}_{i\in I}Y_i)\cong\varinjlim {}_{i\in I}\Hom_{D(A)}(Z,Y_i).$$
Thus, the map $f$ factors through some $Y_i$. Since $f$ is a minimal left $\mathbf{Y}$-approximation, the object $Y$ identifies with a direct summand of $Y_i$. In particular, the whole approximation triangle above lies in $D^b(A)$, as desired.

For the general case, assume $Z\in D^b(A)$. We view $Z$ as an object in $K^-(\proj(A))$ and use truncations. First, note that there exists $i\in \mathbb{Z}$ big enough such that $\tau^{>m}\sigma^{\geq -i}(Z)\simeq \tau^{>m}Z$, where $m$ is given by the intermediate condition $D(A)^{\leq m} \subseteq \mathbf{X} \subseteq D(A)^{\leq n}$.  Since $\mathbf{Y}\subseteq D(A)^{>m}$ we get that  $\tau^{\mathbf{Y}}\tau^{>m}W\simeq \tau^{\mathbf{Y}}W$ for any $W\in D(A)$. Thus $$Y=\tau^{\mathbf{Y}}Z\simeq  \tau^{\mathbf{Y}}\tau^{>m}Z\simeq \tau^{\mathbf{Y}}\tau^{>m}\sigma^{\geq -i}(Z) \simeq \tau^{\mathbf{Y}}\sigma^{\geq -i}(Z) \in D^b(A)$$ as $\sigma^{\geq -i}(Z)$ is compact.

$(3)\Rightarrow (1)$: Finally, assume that $\mathbf{H}=\underrightarrow{\operatorname{hocolim}}(\mathbf{H}\cap D^b(A))$. Take $Z$ in $D^b(A)$ and consider the approximation triangle
\begin{equation*}
\xymatrix{X\ar[r] & Z \ar[r] & Y \ar[r] & X[1]}\tag{$\dagger$}   
\end{equation*}
with respect to $(\mathbf{X},\mathbf{Y})$. Since $(\mathbf{X},\mathbf{Y})$ is intermediate, there is some $k\geq 0$  with $Z[k]$ in $\mathbf{X}$. Instead of the triangle $(\dagger)$ we consider the shifted triangle
$$\xymatrix{X[k]\ar[r] & Z[k] \ar[r] & Y[k] \ar[r] & X[k+1]},$$
which is the approximation triangle of $Z[k]$ with respect to the t-structure $(\mathbf{X}[k],\mathbf{Y}[k])$.
Hence, it is enough to show that for any object $Z$ in $\mathbf{X}\cap D^b(A)$ and $k\geq 0$, the approximation triangle
$$\xymatrix{X[k]\ar[r]^f & Z \ar[r] & Y[k] \ar[r] & X[k+1]}$$
with respect to $(\mathbf{X}[k],\mathbf{Y}[k])$ belongs to $D^b(A)$. We proceed by induction on $k$. The case $k=0$ is trivial.
For the case $k=1$, we argue in a similar way as in the proof of $(2)\Rightarrow (1)$. First, consider a compact object $Z$ in $\mathbf{X}\cap D^b(A)$ together with the approximation triangle
$$\xymatrix{X[1]\ar[r] & Z \ar[r]^{\hspace{-0.2cm}\phi} & Y[1] \ar[r] & X[2]}.$$
We have $Y[1]=H^0(Z)$, where $H^0$ denotes the cohomological functor associated to $(\mathbf{X},\mathbf{Y})$. By assumption, it follows that $H^0(Z)=\underrightarrow{\operatorname{hocolim}}_{i\in I}H_i$ with $H_i$ in $\mathbf{H}\cap D^b(A)$ for all $i$. Since $Z$ is compact, the map $\phi$ factors through some  $H_i$ and, thus, the previous triangle lies in $D^b(A)$. 
Now suppose that $Z$ is an arbitrary object in $\mathbf{X}\cap D^b(A)$. We view $Z$ as an object in $K^-(\proj(A))$ so that the brutal truncations $Z_i:=\sigma^{\geq -i}(Z)$ are compact for all $i$. For every $i$ we get a triangle
$$\xymatrix{\sigma^{<- i}(Z)[-1]\ar[r] & Z_i \ar[r] & Z \ar[r] & \sigma^{< -i}(Z)}.$$
Since $(\mathbf{X},\mathbf{Y})$ is intermediate, the object $\sigma^{< -i}(Z)[-1]$ will belong to $\mathbf{X}$ for sufficiently large $i$. In that case, $Z_i$ will also be in $\mathbf{X}$, as an extension of objects in $\mathbf{X}$. Moreover, we can choose $i$ big enough so that $\tau^{>m-1}Z\simeq \tau^{>m-1}\sigma^{\geq -i}(Z)=\tau^{>m-1}Z_i$, where $m$ is given by the intermediate condition $D(A)^{\leq m} \subseteq \mathbf{X} \subseteq D(A)^{\leq n}$. Hence, as in the proof of $(2)\Rightarrow (1)$ we get
$$Y[1]=\tau^{\mathbf{Y}[1]}Z\simeq  \tau^{\mathbf{Y}[1]}\tau^{>m-1}Z\simeq \tau^{\mathbf{Y}[1]}\tau^{>m-1}Z_i \simeq \tau^{\mathbf{Y}[1]}Z_i \in D^b(A)$$ as $Z_i$ is compact. 

For the induction step, let
$$\xymatrix{X'[k-1]\ar[r]^{\hspace{0.8cm}g} & Z \ar[r] & Y'[k-1] \ar[r] & X'[k]}$$
be the approximation triangle of $Z$ with respect to the t-structure $(\mathbf{X}[k-1],\mathbf{Y}[k-1])$, which belongs to $D^b(A)$ by the induction hypothesis. Moreover, let 
$$\xymatrix{X^{''}[k]\ar[r]^{\hspace{-0.4cm}h} & X'[k-1] \ar[r] & Y^{''}[k] \ar[r] & X^{''}[k+1]}$$
be the approximation triangle of $X'[k-1]$ with respect to $(\mathbf{X}[k],\mathbf{Y}[k])$. Note that, as $X'[k-1]$ belongs to $\mathbf{X}[k-1]\cap D^b(A)$, this triangle belongs to $D^b(A)$ by the base of induction ($k=1$). We obtain the following commutative diagram of morphisms
$$\xymatrix{X^{''}[k]\ar@{.>}[d]^c\ar[r]^{\hspace{-0.4cm}h} & X'[k-1]\ar[d]^g & &\\X[k]\ar[r]^f & Z \ar[r] & Y[k] \ar[r] & X[k+1]}$$
Note that the composition $gh$ factors through $f$, as $f$ is a right $\mathbf{X}[k]$-approximation. On the other hand, since $X[k]$ is in $\mathbf{X}[k]\subseteq\mathbf{X}[k-1]$, the map $f$ factors through $g$, as $g$ is a right $\mathbf{X}[k-1]$-approximation, and thus through the composition $gh$, as $h$ is a right $\mathbf{X}[k]$-approximation. Hence, we get a map $d\colon X[k]\longrightarrow X^{''}[k]$ such that $f=ghd=fcd$. The minimality of $f$ guarantees that $cd$ is an isomorphism, showing that $X[k]$ lies in $D^b(A)$, as a direct summand of $X^{''}[k]$. This finishes the proof.
\end{proof}

As an immediate consequence, it follows that every intermediate, homotopically smashing t-structure in $D(A)$ that restricts to $D^b(A)$ is necessarily compactly generated. We can now determine precisely, which t-structures we obtain from lifting. In fact, it turns out that the necessary conditions discussed in Lemma \ref{comp.gen.} and Theorem \ref{thm lifting} are also sufficient.

\begin{corollary}\label{bijection}
Lifting and restricting t-structures yields a bijection between
\begin{enumerate}
	\item intermediate t-structures $(\mathcal{X},\mathcal{Y})$ in $D^b(A)$;
	\item intermediate, compactly generated t-structures $(\mathbf{X},\mathbf{Y})$ in $D(A)$ with a locally coherent Grothendieck heart $\mathbf{H}$, whose finitely presented objects are given by $\mathbf{H}\cap D^b(A)$.
\end{enumerate}
Moreover, the t-structures in $(2)$ are precisely the intermediate homotopically smashing t-structures in $D(A)$ that restrict to $D^b(A)$.
\end{corollary}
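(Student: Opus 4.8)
The plan is to establish the bijection in Corollary~\ref{bijection} by showing that the two constructions --- lifting (Lemma~\ref{comp.gen.} / Theorem~\ref{thm lifting}) and restricting (Section~\ref{restricting}) --- are mutually inverse, once we know both are well-defined maps between the two classes. The first half is already in place: by Lemma~\ref{comp.gen.} and Theorem~\ref{thm lifting}, lifting sends an intermediate t-structure $(\mathcal{X},\mathcal{Y})$ in $D^b(A)$ to an intermediate, compactly generated t-structure $(\mathbf{X},\mathbf{Y})$ in $D(A)$ whose heart $\mathbf{H}$ is a locally coherent Grothendieck category with finitely presented objects exactly $\mathcal{H} = \mathbf{H}\cap D^b(A)$; so lifting indeed lands in class~(2). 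Conversely, I must check that restricting sends class~(2) to class~(1), i.e.\ that an intermediate, compactly generated t-structure $(\mathbf{X},\mathbf{Y})$ in $D(A)$ with locally coherent Grothendieck heart $\mathbf{H}$ whose finitely presented objects are $\mathbf{H}\cap D^b(A)$ actually restricts to $D^b(A)$. This is where Theorem~\ref{thm rest} does the work: a compactly generated t-structure is homotopically smashing, and the hypothesis that the finitely presented objects of $\mathbf{H}$ are $\mathbf{H}\cap D^b(A)$ --- combined with the fact (invoked in the proof of Theorem~\ref{thm lifting}(2), via \cite[Corollary 5.8]{SSV}) that direct limits in $\mathbf{H}$ are computed as directed homotopy colimits in $D(A)$ and that every object of a locally coherent category is a direct limit of its finitely presented subobjects --- gives precisely condition~(3) of Theorem~\ref{thm rest}, namely $\mathbf{H} = \underrightarrow{\operatorname{hocolim}}(\mathbf{H}\cap D^b(A))$. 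Hence $(\mathbf{X},\mathbf{Y})$ restricts to an intermediate t-structure in $D^b(A)$, so restriction lands in class~(1).

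Next I would verify that the two maps are inverse to each other. Going from~(1) to~(2) and back: the remark after Theorem~\ref{thm lifting} already records that $(\mathbf{X}\cap D^b(A), \mathbf{Y}\cap D^b(A)) = (\mathcal{X},\mathcal{Y})$ for the lifted t-structure, so restricting the lift recovers the original t-structure on the nose. Going from~(2) to~(1) and back: start with $(\mathbf{X},\mathbf{Y})$ in class~(2), restrict to $(\mathcal{X},\mathcal{Y}) = (\mathbf{X}\cap D^b(A), \mathbf{Y}\cap D^b(A))$, then lift again. By Theorem~\ref{thm rest} (the ``in particular'' clauses, plus condition~(2)), we have $\mathbf{X} = \underrightarrow{\operatorname{hocolim}}(\mathbf{X}\cap D^b(A)) = \underrightarrow{\operatorname{hocolim}}(\mathcal{X})$ and $\mathbf{Y} = \underrightarrow{\operatorname{hocolim}}(\mathbf{Y}\cap D^b(A)) = \underrightarrow{\operatorname{hocolim}}(\mathcal{Y})$; but by Theorem~\ref{thm lifting}(1) the lift of $(\mathcal{X},\mathcal{Y})$ is exactly $(\underrightarrow{\operatorname{hocolim}}(\mathcal{X}), \underrightarrow{\operatorname{hocolim}}(\mathcal{Y}))$, so the lift of the restriction is $(\mathbf{X},\mathbf{Y})$ again. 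This closes the loop in both directions and proves the bijection.

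The main obstacle, and the step deserving the most care, is the implication ``class~(2) $\Rightarrow$ restricts to $D^b(A)$'', i.e.\ the passage to condition~(3) of Theorem~\ref{thm rest}. One must be precise about what ``whose finitely presented objects are given by $\mathbf{H}\cap D^b(A)$'' means: it presupposes that $\mathbf{H}\cap D^b(A)$ consists of objects of $\mathbf{H}$ that are finitely presented \emph{in the categorical sense} ($\Hom_{\mathbf{H}}(-,-)$ commuting with direct limits), and one needs to know that every object of the locally coherent Grothendieck category $\mathbf{H}$ is a direct limit --- equivalently, by \cite[Corollary 5.8]{SSV}, a directed homotopy colimit in $D(A)$ --- of its finitely presented subobjects. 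The subtlety is matching the categorical direct limit inside $\mathbf{H}$ with the directed homotopy colimit in $D(A)$; this identification is exactly what the cited results of \cite{SSV} provide, and I would spell out that the direct system of finitely presented subobjects of $H \in \mathbf{H}$ lifts to a coherent diagram whose homotopy colimit is $H$, giving $\mathbf{H} = \underrightarrow{\operatorname{hocolim}}(\mathbf{H}\cap D^b(A))$. Everything else --- homotopically smashing from compact generation, and the inverse-pair bookkeeping --- is then essentially formal given the theorems already proved.
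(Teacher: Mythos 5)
Your proposal is correct and follows essentially the same route as the paper: lifting is well-defined and injective by Lemma~\ref{comp.gen.} and Theorem~\ref{thm lifting}, and for a t-structure in class~(2) one uses \cite[Theorem A and Corollary 5.8]{SSV} together with local coherence of $\mathbf{H}$ to obtain $\mathbf{H}=\underrightarrow{\operatorname{hocolim}}(\mathbf{H}\cap D^b(A))$, then applies Theorem~\ref{thm rest} to restrict and recover the t-structure via lifting. Your extra care in spelling out both composites and the identification of direct limits in $\mathbf{H}$ with directed homotopy colimits matches what the paper leaves implicit.
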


\begin{proof}
By Lemma \ref{comp.gen.} and Theorem \ref{thm lifting}, lifting intermediate t-structures in $D^b(A)$ yields a well-defined, injective map from $(1)$ to $(2)$. Now take a t-structure $(\mathbf{X},\mathbf{Y})$ in $D(A)$ as in $(2)$. Using \cite[Theorem A and Corollary 5.8]{SSV}, we can compute direct limits in $\mathbf{H}$ as directed homotopy colimits in $D(A)$. Hence, it follows that $\mathbf{H}=\underrightarrow{\operatorname{hocolim}}(\mathbf{H}\cap D^b(A))$. By Theorem~\ref{thm rest}, the t-structure $(\mathbf{X},\mathbf{Y})$ restricts to $D^b(A)$, and we can recover it via lifting, as $(\mathbf{X},\mathbf{Y})$ is compactly generated. The final statement follows again from Theorem~\ref{thm rest}.
\end{proof}

We finish this section with an application of Theorems \ref{thm lifting} and \ref{thm rest} providing a criterion for a derived equivalence to preserve coherent rings. For that we need some further definitions.

A complex $T\in K^b(\proj(A))$ is called {\bf tilting}, if $\Hom_{D(A)}(T,T[i])=0$ for any $i\neq 0$ and if the smallest triangulated subcategory of $K^b(\proj(A))$ containing $T$ and closed under direct summands is $K^b(\proj(A))$ itself. We say that two rings $A$ and $B$ are {\bf derived equivalent} if the derived categories $D(A)$ and $D(B)$ are triangle equivalent. By \cite{R,K}, this happens if and only if there exists a tilting complex $T$ in $K^b(\proj(A))$ such that $\End_{D(A)}(T)\simeq B$. Under the triangle equivalence defined by the tilting complex $T$, the standard t-structure on $D(B)$ is mapped to the t-structure on $D(A)$ generated by $T$, that is the t-structure $(\mathbf{X},\mathbf{Y})=(T^{\perp_{> 0}},T^{\perp_{\leq 0}})$. Its heart $\mathbf{H}$ is equivalent to $\Mod(B)$.

\begin{corollary}\label{coherent}
Let $A$ be a right coherent ring and $T$ be a tilting complex in $K^b(\proj(A))$. Then the ring $B=\End_{D(A)}(T)$ is right coherent if and only if the t-structure $(T^{\perp_{> 0}},T^{\perp_{\leq 0}})$ restricts to $D^b(A)$.
\end{corollary}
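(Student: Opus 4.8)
The plan is to recognise $(\mathbf{X},\mathbf{Y}):=(T^{\perp_{>0}},T^{\perp_{\leq 0}})$ as an instance of the situations treated in Theorems \ref{thm lifting} and \ref{thm rest} and then to run the syzygy machinery of the lifting proof ``backwards''. First, $(\mathbf{X},\mathbf{Y})$ is intermediate because $T\in K^b(\proj(A))$ has cohomology in a bounded range, and it is compactly generated --- hence homotopically smashing --- because it is generated by the single compact object $T$. Its heart $\mathbf{H}$ is equivalent to $\Mod(B)$ via an equivalence which sends the regular module $B$ to $T$ and, being an equivalence of abelian categories, identifies the finitely presented objects of $\mathbf{H}$ with $\fpmod(B)$. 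Denote by $F\colon D(B)\longrightarrow D(A)$ the triangle equivalence induced by $T$: it carries the standard t-structure of $D(B)$ to $(\mathbf{X},\mathbf{Y})$, satisfies $F(B)\cong T$, and, being an equivalence, restricts to an equivalence $K^b(\proj(B))\xrightarrow{\ \sim\ }K^b(\proj(A))$ between the subcategories of compact objects.

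Assume first that $B$ is right coherent. Since $\Mod(B)$ is locally finitely presented and, by \cite[Theorem A and Corollary 5.8]{SSV}, direct limits in $\mathbf{H}$ are computed as directed homotopy colimits in $D(A)$, every object of $\mathbf{H}$ is a directed homotopy colimit of objects from the image $\mathcal{F}\subseteq\mathbf{H}$ of $\fpmod(B)$; as $\mathbf{H}$ is also closed under directed homotopy colimits, we get $\mathbf{H}=\underrightarrow{\operatorname{hocolim}}(\mathcal{F})$. If one knows in addition that $\mathcal{F}\subseteq D^b(A)$, then $\mathcal{F}\subseteq\mathbf{H}\cap D^b(A)$ and hence $\mathbf{H}=\underrightarrow{\operatorname{hocolim}}(\mathbf{H}\cap D^b(A))$, so $(\mathbf{X},\mathbf{Y})$ restricts to $D^b(A)$ by Theorem \ref{thm rest}. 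Thus this direction reduces to showing $F(M)\in D^b(A)$ for every $M\in\fpmod(B)$.

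This is where I expect the main difficulty to lie, since $F$ is not t-exact and its effect on cohomology cannot be read off degreewise. The idea is to resolve $M$ and use intermediacy to control a fixed band of cohomological degrees. As $B$ is coherent, $M$ has a resolution $P_\bullet=(\cdots\to P^{-1}\to P^0)$ by finitely generated projective $B$-modules, and for each $N\geq 0$ brutal truncation yields a triangle
\[\Omega^{N+1}(M)[N]\longrightarrow\sigma^{\geq -N}(P_\bullet)\longrightarrow M\longrightarrow\Omega^{N+1}(M)[N+1]\]
in $D(B)$, with $\sigma^{\geq -N}(P_\bullet)\in K^b(\proj(B))$ and $\Omega^{N+1}(M)$ the $(N+1)$-st syzygy of $M$. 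Applying $F$ produces a triangle in $D(A)$ in which $F(\sigma^{\geq -N}(P_\bullet))$ lies in $K^b(\proj(A))$, hence has finitely presented cohomology because $A$ is coherent, while the remaining two terms are shifts of $F(\Omega^{N+1}(M))\in\mathbf{H}$. Since $(\mathbf{X},\mathbf{Y})$ is intermediate, $\mathbf{H}=\mathbf{X}\cap\mathbf{Y}[1]$ is contained in a fixed bounded window $D(A)^{\leq n}\cap D(A)^{\geq m}$; in particular $F(M)$ has cohomology concentrated in degrees $m,\dots,n$, and for $N$ large the shifted syzygy terms $F(\Omega^{N+1}(M))[N]$ and $F(\Omega^{N+1}(M))[N+1]$ have no cohomology in degrees $\geq m$. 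The long exact cohomology sequence of the triangle then gives $H^i(F(M))\cong H^i(F(\sigma^{\geq -N}(P_\bullet)))$ for all $i\in\{m,\dots,n\}$ and all $N\gg 0$, which is finitely presented, while $H^i(F(M))=0$ for $i\notin\{m,\dots,n\}$. Hence $F(M)$ has bounded, finitely presented cohomology, and since $A$ is coherent this is exactly the statement that $F(M)\in D^b(A)$.

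Conversely, suppose $(\mathbf{X},\mathbf{Y})$ restricts to $D^b(A)$. Being compactly generated, it equals the lift of the intermediate t-structure $(\mathbf{X}\cap D^b(A),\mathbf{Y}\cap D^b(A))$ in $D^b(A)$ --- this is the argument opening the proof of Theorem \ref{thm rest} --- so by Theorem \ref{thm lifting}(2) the heart $\mathbf{H}$ is a locally coherent Grothendieck category whose finitely presented objects are $\mathbf{H}\cap D^b(A)$. Transporting along $\mathbf{H}\simeq\Mod(B)$, the category $\fpmod(B)$ is therefore the category of finitely presented objects of a locally coherent category, hence an exact abelian subcategory of $\Mod(B)$; this means precisely that $B$ is right coherent. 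Alternatively, one can simply invoke Corollary \ref{bijection}. This completes the plan.
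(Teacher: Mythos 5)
Your proof is correct, and one half of it takes a genuinely different route from the paper's. The direction ``restricts to $D^b(A)$ $\Rightarrow$ $B$ right coherent'' is essentially the paper's argument: recover $(\mathbf{X},\mathbf{Y})$ by lifting, apply Theorem \ref{thm lifting}(2) to identify the finitely presented objects of $\mathbf{H}\simeq\Mod(B)$ with $\mathbf{H}\cap D^b(A)$, and conclude that $\fpmod(B)$ is an abelian category. For the converse, the paper simply quotes \cite[Proposition 8.1]{R}: a derived equivalence between right coherent rings restricts to the bounded derived categories of finitely presented modules, which immediately transports the standard t-structure of $D^b(B)$ to $D^b(A)$. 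You instead verify condition (3) of Theorem \ref{thm rest} directly, and the core of your argument --- resolving $M\in\fpmod(B)$ by finitely generated projectives, brutally truncating, and using intermediacy of $(\mathbf{X},\mathbf{Y})$ to see that in the fixed cohomological window $[m,n]$ the cohomology of $F(M)$ agrees with that of the perfect complex $F(\sigma^{\geq -N}(P_\bullet))$ for $N\gg 0$ --- is in effect a self-contained proof of exactly the special case of Rickard's proposition needed here, namely $F(\fpmod(B))\subseteq D^b(A)$. The computation is sound: $F$ preserves compact (perfect) complexes, perfect complexes over the coherent ring $A$ have bounded finitely presented cohomology, and the shifted syzygy terms vacate the window for large $N$. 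What your route buys is independence from \cite{R} and a demonstration that the restriction criterion of Theorem \ref{thm rest} can be checked by hand; what it costs is length, plus the (harmless, but worth stating explicitly) reliance on \cite[Theorem A and Corollary 5.8]{SSV} to realize direct limits in $\mathbf{H}\simeq\Mod(B)$ as directed homotopy colimits, exactly as in the proof of Corollary \ref{bijection}.
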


\begin{proof}
If the t-structure $(T^{\perp_{> 0}},T^{\perp_{\leq 0}})$ restricts to $D^b(A)$, we can recover it via lifting. Thus, the finitely presented objects in its heart $\mathbf{H}$ are of the form $\mathbf{H}\cap D^b(A)$ and they form an abelian category. Since $\mathbf{H}$ is equivalent to $\Mod(B)$, this implies that the finitely presented objects in $\Mod(B)$ form an abelian category. So $B$ is right coherent.

If $B$ is right coherent the standard t-structure on $D(B)$ restricts to the standard t-structure on $D^b(B)$, and the triangle equivalence $D(B)\cong D(A)$ induced by $T$ restricts to the level of bounded derived categories (see \cite[Proposition 8.1]{R}). Since the standard t-structure on $D(B)$ identifies with $(T^{\perp_{> 0}},T^{\perp_{\leq 0}})$ under this equivalence, we get that $(T^{\perp_{> 0}},T^{\perp_{\leq 0}})$ restricts to $D^b(A)$, as desired.
\end{proof}

Note that Corollary \ref{coherent} also appears as \cite[Proposition 6.6]{PSV}.

\begin{remark}\label{rem}
In case we have a triangle equivalence $F\colon D(B)\longrightarrow D(A)$ that is hypothetically not given by a tilting complex $T$, we can still argue as above by considering the t-structure $(F(D(B)^{\leq 0}), F(D(B)^{\geq 1}))$ instead of $(T^{\perp_{> 0}},T^{\perp_{\leq 0}})$ in $D(A)$. In particular, the ring $B$ is right coherent if and only if this t-structure restricts to $D^b(A)$.
\end{remark}

\section{Case study: HRS-t-structures}\label{section HRS}

In this section, we assume that $A$ is a right noetherian ring and we specialize Corollary~\ref{bijection} to the setting of HRS-t-structures arising from torsion pairs in module categories. Recall that a torsion pair $(\mathcal{T},\mathcal{F})$ in $\fpmod(A)$  induces a t-structure $(\mathcal{X},\mathcal{Y})$ in $D^b(A)$  as follows:
$$\mathcal{X}=\{X\in D^b(A)\mid H^i(X)=0 \text{ for }i>0 \text{ and } H^0(X)\in \mathcal{T} \} $$
$$\mathcal{Y}=\{Y\in D^b(A)\mid H^i(Y)=0 \text{ for }i<0 \text{ and } H^0(Y)\in \mathcal{F} \},$$
where $H^i$ denotes the cohomological functor with respect to the shifted standard t-structure $(D^b(A)^{\leq i},D^b(A)^{\geq i+1})$. Analogously, a torsion pair $(\mathbf{T},\mathbf{F})$ in $\Mod(A)$ induces a t-structure in $D(A)$. We call these t-structures {\bf HRS-t-structures}; see \cite{HRS} for details. By \cite[Lemma 1.1.2]{Po}, HRS-t-structures are precisely the t-structures $(\mathcal{X},\mathcal{Y})$ in $D^b(A)$ (respectively, $D(A)$) with $D^b(A)^{\leq -1}\subseteq\mathcal{X}\subseteq D^b(A)^{\leq 0}$ (respectively, $D(A)^{\leq -1}\subseteq\mathcal{X}\subseteq D(A)^{\leq 0}$), i.e. intermediate t-structures with the smallest non-trivial interval $[m,n]$ (see also \cite{BR}).

Starting with a torsion pair $(\mathcal{T},\mathcal{F})$ in $\fpmod(A)$, considering the associated HRS-t-structure in $D^b(A)$ and lifting this t-structure, we obtain an HRS-t-structure in $D(A)$ corresponding to some torsion pair $(\mathbf{T},\mathbf{F})$ in $\Mod(A)$. Since $H^0$ sends directed homotopy colimits to direct limits in $\Mod(A)$, we get that 
$$(\mathbf{T},\mathbf{F})=(\varinjlim(\mathcal{T}),\varinjlim(\mathcal{F})).$$
This recovers a well-known construction of lifting torsion pairs from $\fpmod(A)$ to $\Mod(A)$ by closing $\mathcal{T}$ and $\mathcal{F}$ under direct limits of objects in $\mathcal{T}$ and $\mathcal{F}$, respectively (see \cite[Section 4.4]{CB}). Recall that a torsion pair $(\mathbf{T},\mathbf{F})$ in $\Mod(A)$ is said to be of {\bf finite type}, if the class $\mathcal{F}$ is closed under direct limits. 
Torsion pairs of the form $(\varinjlim(\mathcal{T}),\varinjlim(\mathcal{F}))$ provide examples of torsion pairs of finite type.
The following proposition makes precise the connection between lifting HRS-t-structures on the level of torsion pairs and on the level of t-structures.

\begin{proposition}\label{HRS}
Let $A$ be a right noetherian ring. Then
there is a commutative square of bijections as follows.
$$\xymatrix{{\left\{\begin{array}{c}\text{t-structures $(\mathcal{X},\mathcal{Y})$ in $D^b(A)$ with} \\ \text{$D^b(A)^{\leq -1}\subseteq\mathcal{X}\subseteq D^b(A)^{\leq 0}$} \end{array}\right\}}\ar[rr]^{\hspace{0.9cm}1:1}\ar@<-1ex>[dd]_{\underrightarrow{\operatorname{hocolim}}} & & {\left\{\begin{array}{c}\text{torsion pairs in $\fpmod(A)$} \end{array}\right\}}\ar[ll]\ar@<-1ex>[dd]_{\varinjlim} \\  \\{\left\{\begin{array}{c}\text{compactly generated t-structures }\\ \text{$(\mathbf{X},\mathbf{Y})$ in $D(A)$ with} \\ \text{$D(A)^{\leq -1}\subseteq\mathbf{X}\subseteq D(A)^{\leq 0}$}
\end{array}\right\}}\ar[rr]^{\hspace{0.9cm}1:1}\ar@<-1ex>[uu]_{_{}\operatorname{restriction}} & & {\left\{\begin{array}{c}\text{torsion pairs in $\Mod(A)$}\\ \text{of finite type}
\end{array}\right\}}\ar[ll]\ar@<-1ex>[uu]_{_{}\operatorname{restriction}} }$$
Moreover, a t-structure $(\mathbf{X},\mathbf{Y})$ in $D(A)$ with $D(A)^{\leq -1}\subseteq\mathbf{X}\subseteq D(A)^{\leq 0}$ is compactly generated if and only if its heart $\mathbf{H}$ is a Grothendieck category. In this case, $\mathbf{H}$ is locally coherent and the finitely presented objects of $\mathbf{H}$ are given by $\mathbf{H}\cap D^b(A)$.
\end{proposition}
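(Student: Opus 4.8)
The plan is to reduce the proposition to its ``moreover'' clause, since the two outer bijections of the square are classical and commutativity has already been recorded in the discussion preceding the statement. The top horizontal arrow is the classical HRS-correspondence \cite{HRS}, which to an HRS-interval t-structure $(\mathcal{X},\mathcal{Y})$ in $D^b(A)$ associates the torsion pair $(\mathcal{X}\cap\fpmod(A),\mathcal{Y}\cap\fpmod(A))$; the right-hand arrow is Crawley-Boevey's bijection \cite[Section 4.4]{CB} between torsion pairs in $\fpmod(A)$ and torsion pairs of finite type in $\Mod(A)$, which is available because $A$ is noetherian (so $\fpmod(A)$ is abelian and closed under subobjects, and torsion pairs restrict from $\Mod(A)$ to $\fpmod(A)$); and commutativity of the square is precisely the observation made before the statement, using that the cohomological functor $H^0$ of an HRS-t-structure fixes the modules in its torsion class and commutes with directed homotopy colimits by Theorem \ref{thm lifting}. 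So everything reduces to the following claim: for a t-structure $(\mathbf{X},\mathbf{Y})$ in $D(A)$ with $D(A)^{\leq -1}\subseteq\mathbf{X}\subseteq D(A)^{\leq 0}$, corresponding to a torsion pair $(\mathbf{T},\mathbf{F})$ in $\Mod(A)$ and with heart $\mathbf{H}$, the three conditions ``$(\mathbf{X},\mathbf{Y})$ is compactly generated'', ``$(\mathbf{T},\mathbf{F})$ is of finite type'' and ``$\mathbf{H}$ is a Grothendieck category'' are equivalent, and under these conditions $(\mathbf{X},\mathbf{Y})$ is the lifted t-structure associated to the HRS-t-structure of $(\mathbf{T}\cap\fpmod(A),\mathbf{F}\cap\fpmod(A))$ in $D^b(A)$. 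Once this is established, the bottom row is a bijection and the two vertical maps are mutually inverse: restriction after lifting is the identity by Theorem \ref{thm lifting}, and lifting after restriction is the identity by the ``under these conditions'' part (equivalently, this is Corollary \ref{bijection} specialised to the HRS-interval).

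I would prove the claim as a cycle of implications. For ``compactly generated $\Rightarrow$ finite type'': a compactly generated t-structure is homotopically smashing, so $\mathbf{Y}$ is closed under directed homotopy colimits; a direct system $(F_i\mid i\in I)$ of modules in $\mathbf{F}$ has its $\Mod(A)$-colimit equal to its directed homotopy colimit in $D(A)$ (directed homotopy colimits are computed as direct limits of complexes), hence $\varinjlim F_i$ lies in $\mathbf{Y}$, and being a module it lies in $\mathbf{F}$; thus $\mathbf{F}$ is closed under direct limits. For ``finite type $\Rightarrow$ compactly generated'' together with the statement on $\mathbf{H}$: if $(\mathbf{T},\mathbf{F})$ is of finite type, then by \cite[Section 4.4]{CB} it equals $(\varinjlim\mathcal{T},\varinjlim\mathcal{F})$ for the restricted torsion pair $(\mathcal{T},\mathcal{F})=(\mathbf{T}\cap\fpmod(A),\mathbf{F}\cap\fpmod(A))$ in $\fpmod(A)$; by the discussion preceding the statement the lifted t-structure associated to the HRS-t-structure of $(\mathcal{T},\mathcal{F})$ is exactly the HRS-t-structure of $(\varinjlim\mathcal{T},\varinjlim\mathcal{F})=(\mathbf{T},\mathbf{F})$, i.e. $(\mathbf{X},\mathbf{Y})$ itself, so $(\mathbf{X},\mathbf{Y})$ is compactly generated by Lemma \ref{comp.gen.}, and Theorem \ref{thm lifting}(2) shows that $\mathbf{H}$ is a locally coherent Grothendieck category with finitely presented objects $\mathbf{H}\cap D^b(A)$. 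This in particular gives ``finite type $\Rightarrow$ Grothendieck''. To close the cycle I would invoke ``Grothendieck $\Rightarrow$ finite type'', i.e. the characterisation (valid over the noetherian ring $A$) of those torsion pairs in $\Mod(A)$ whose HRS-heart is a Grothendieck category as precisely the torsion pairs of finite type, due to Parra and Saor\'{i}n.

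The main obstacle is this last implication, ``$\mathbf{H}$ Grothendieck $\Rightarrow$ $(\mathbf{T},\mathbf{F})$ of finite type'': all the other steps are either classical or immediate consequences of the material already developed in Sections \ref{lifting} and \ref{restricting} (Lemma \ref{comp.gen.}, Theorem \ref{thm lifting}, Theorem \ref{thm rest}), whereas this one genuinely requires the Parra--Saor\'{i}n analysis of direct limits in HRS-hearts. A secondary, more routine point is to keep track of where the noetherian hypothesis is actually used --- namely to make $\fpmod(A)$ abelian and closed under subobjects, so that torsion pairs restrict from $\Mod(A)$ and Crawley-Boevey's correspondence is available.
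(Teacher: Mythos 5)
Your proposal is correct and follows essentially the same route as the paper: both reduce the square to the three-way equivalence ``compactly generated $\Leftrightarrow$ finite type $\Leftrightarrow$ Grothendieck heart'' for HRS-t-structures, together with the outer bijections and the commutativity observation already made before the statement. The only difference is bookkeeping of citations: the paper outsources the entire three-way equivalence to \cite[Theorem 3.10]{BP}, whereas you re-derive two of the implications from Lemma \ref{comp.gen.} and Theorem \ref{thm lifting} and invoke the Parra--Saor\'in analysis (on which \cite{BP} is based) only for ``Grothendieck $\Rightarrow$ finite type'' --- a slightly more self-contained but equivalent packaging.
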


\begin{proof}
The upper horizontal bijection was already mentioned above.
Since $A$ is right noetherian, every torsion pair in $\Mod(A)$ restricts to a torsion pair in $\fpmod(A)$. Thus, it is not hard to check that the vertical maps on the right hand side of the diagram induce mutually inverse bijections.
By \cite[Theorem 3.10]{BP}, a torsion pair in $\Mod(A)$ is of finite type if and only if the associated HRS-t-structure is compactly generated if and only if the heart of this t-structure is a Grothendieck category. In particular, we get the lower horizontal bijection. Now it remains to observe that lifting t-structures on the left hand side yields a commutative diagram, so lifting induces a bijection with restriction being its inverse. Note that this bijection is a special case of the bijection appearing in Corollary \ref{bijection}.
\end{proof}

We have just seen that every compactly generated t-structure $(\mathbf{X},\mathbf{Y})$ in $D(A)$ fulfilling $D(A)^{\leq -1}\subseteq\mathbf{X}\subseteq D(A)^{\leq 0}$ restricts to $D^b(A)$. The following example shows that a similar statement fails, if we allow the interval to be any larger.

\begin{example}\label{Kronecker}
Let $A$ be the Kronecker algebra over an algebraically closed field $k$. Let $(\mathbf{X},\mathbf{Y})$ be the t-structure in $D(A)$ generated by the stalk complexes $A[2]$ and $S_\lambda$, where $S_\lambda$ denotes a simple regular $A$-module for $\lambda$ in $k\cup\{\infty\}$. Thus, we have $\mathbf{Y}=\mathcal{S}^\perp$ for $\mathcal{S}=\{A[n+2]\oplus S_\lambda[n]\mid n\geq0\}$. It follows that $(\mathbf{X},\mathbf{Y})$ is compactly generated with $D(A)^{\leq -2}\subseteq\mathbf{X}\subseteq D(A)^{\leq 0}$. But $(\mathbf{X},\mathbf{Y})$ does not restrict to $D^b(A)$ by \cite[Corollary 4.5(1)]{ST}.
\end{example}

\section{Connections with silting theory}\label{section silting}

An object $T$ in $D(A)$ is called {\bf silting}, if the pair $(T^{\perp_{> 0}},T^{\perp_{\leq 0}})$ forms a t-structure in $D(A)$. Dually, an object $C$ in $D(A)$ is called {\bf cosilting}, if the pair $(^{\perp_{\leq 0}}C,^{\perp_{> 0}}\!C)$ forms a t-structure in $D(A)$. We refer to these t-structures as {\bf silting} and {\bf cosilting t-structures}; further details can be found in \cite{NSZ,PV}.

Note that it follows from \cite[Theorem 4.6]{L} that every intermediate and homotopically smashing t-structure $(\mathbf{X},\mathbf{Y})$ in $D(A)$ is cosilting with respect to a pure-injective cosilting object $C$ in $D(A)$ with bounded cohomology. In fact, $C$ has non-zero cohomology only in the interval determined by the intermediate condition, and we can choose $C$ to be a bounded complex of injective $A$-modules. In particular, every t-structure in $D(A)$ obtained from lifting via homotopy colimits is controlled by such a cosilting complex. In general, however, we do not know how to deduce from the properties of the cosilting complex, if the associated t-structure restricts to $D^b(A)$; only partial results are available. For example, cosilting t-structures with respect to 2-term cosilting complexes of injective $A$-modules are (up to shift) precisely HRS-t-structures with respect to torsion pairs of finite type in $\Mod(A)$ (see \cite[Corollary 3.9]{A}). Thus, if $A$ is right noetherian, they restrict to $D^b(A)$ by Proposition \ref{HRS}. On the other hand, it follows from Example \ref{Kronecker} that not every cosilting t-structure with respect to a 3-term cosilting complex restricts to $D^b(A)$.

Next, we discuss various conditions for the lifted t-structures to be silting as well. For this purpose, we restrict ourselves to the case when $A$ is a finite dimensional algebra, which allows us to use the fundamental correspondences from \cite{KY}.

\begin{proposition}\label{silting}
Let $A$ be a finite dimensional algebra over a field $k$. Let $(\mathcal{X},\mathcal{Y})$ be an intermediate t-structure in $D^b(A)$ with heart $\mathcal{H}$, and let $(\mathbf{X},\mathbf{Y})$ be the associated lifted t-structure in $D(A)$, whose heart is denoted by $\mathbf{H}$. Then the following conditions are equivalent.
\begin{enumerate}
\item $(\mathbf{X},\mathbf{Y})$ is a silting t-structure with respect to a silting object $T$;
\item $\mathcal{X}\cap K^b(\proj(A))$ is the coaisle of a co-t-structure in $K^b(\proj(A))$;
\item $\mathcal{H}\cong\fpmod(\Lambda)$ for a finite dimensional $k$-algebra $\Lambda$;
\item $\mathbf{H}\cong\Mod(\Lambda')$ for a finite dimensional $k$-algebra $\Lambda'$;
\item $(\mathbf{X},\mathbf{Y})$ is a silting t-structure with respect to a compact silting object $T'$.
\end{enumerate}
\end{proposition}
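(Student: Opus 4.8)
The plan is to prove the six conditions equivalent by establishing a cycle of implications, taking advantage of the Koenig--Yang correspondence and the bijection of Corollary \ref{bijection}. The natural order is $(6)\Rightarrow(1)\Rightarrow(2)$, then $(2)\Rightarrow(3)$, then $(3)\Rightarrow(4)$, then $(4)\Rightarrow(5)$, and finally $(5)\Rightarrow(6)$, so that each step uses the most convenient description of the t-structure.

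First, the implication $(6)\Rightarrow(1)$ is a tautology once one observes that a compact silting object is in particular a silting object. For $(1)\Rightarrow(2)$, if $(\mathbf{X},\mathbf{Y})=(T^{\perp_{>0}},T^{\perp_{\leq 0}})$ then $\mathbf{X}={}^\perp(T^{\perp_{\leq 0}})={}^\perp \mathbf{Y}$, and the left orthogonal of any class is automatically closed under products in $D(A)$; this is the easy structural half. For $(2)\Rightarrow(3)$: since $(\mathbf{X},\mathbf{Y})$ restricts to $(\mathcal{X},\mathcal{Y})$ in $D^b(A)$ by construction, and since $A$ is a finite dimensional algebra, one should argue that $\mathcal{X}\cap K^b(\proj(A))$ together with its right orthogonal inside $K^b(\proj(A))$ forms a co-t-structure. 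Here the product-closure of $\mathbf{X}$ should be used to produce the ``opposite'' truncation: the point is that closure under products together with the intermediate condition forces $\mathbf{X}$ to also be an aisle of a t-structure whose coaisle is a shift of $\mathbf{X}$ up to the interval, and intersecting with $K^b(\proj(A))$ yields the desired co-t-structure; this is essentially the co-t-structure on $K^b(\proj(A))$ paired with a t-structure on $D^b(A)$ under the Koenig--Yang bijections \cite{KY}. For $(3)\Rightarrow(4)$: a co-t-structure in $K^b(\proj(A))$ with the appropriate boundedness corresponds, via \cite{KY}, to a silting object $P$ in $K^b(\proj(A))$, and the heart of the associated t-structure in $D^b(A)$ is then $\fpmod(\End_{K^b(\proj(A))}(P))=:\fpmod(\Lambda)$, a finite dimensional algebra since $P$ is a bounded complex of finitely generated projectives over a finite dimensional algebra. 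The implication $(4)\Rightarrow(5)$ should follow from Theorem \ref{thm lifting}(2): $\mathbf{H}=\underrightarrow{\operatorname{hocolim}}(\mathcal{H})$ is the category of all modules over the ring of finitely presented generators, i.e.\ $\mathbf{H}\cong \Mod(\Lambda)$ if $\mathcal{H}\cong\fpmod(\Lambda)$, using that $\Lambda$ is coherent (being finite dimensional) so that $\mathbf{H}$ is the ind-completion of its finitely presented objects; one sets $\Lambda'=\Lambda$.

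Finally, for $(5)\Rightarrow(6)$: if $\mathbf{H}\cong\Mod(\Lambda')$ with $\Lambda'$ finite dimensional, then $\mathbf{H}$ has a projective generator $\Lambda'$ which is a finitely presented object of $\mathbf{H}$, hence lies in $\mathbf{H}\cap D^b(A)=\mathcal{H}$ by the remark following Theorem \ref{thm lifting}. One then wants to build a compact silting object $T'$ in $D(A)$ out of this projective generator. The standard way is to take a projective resolution over $A$ truncated using the intermediate bounds: since $\mathcal{H}\subseteq D^b(A)$ sits in a bounded interval, $\Lambda'$ viewed in $D^b(A)$ is a bounded complex of finitely presented $A$-modules and, because $A$ is finite dimensional, it is isomorphic in $D^b(A)$ to an object of $K^b(\proj(A))$; projectivity of $\Lambda'$ in $\mathbf{H}$ should translate, via $\Hom_{\mathbf{H}}(\Lambda',-)=\Hom_{D(A)}(\Lambda',H^0(-))$ and the t-structure truncations, into the vanishing $\Hom_{D(A)}(T',T'[i])=0$ for $i\neq 0$, and generation of $\mathbf{H}$ by $\Lambda'$ should give that $T'$ generates $(\mathbf{X},\mathbf{Y})$; one then invokes the Koenig--Yang correspondence \cite{KY} to conclude $T'$ is a silting object of $K^b(\proj(A))$, hence compact, and that $(\mathbf{X},\mathbf{Y})=(T'^{\perp_{>0}},T'^{\perp_{\leq 0}})$.

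The main obstacle I anticipate is the implication $(2)\Rightarrow(3)$ (equivalently, extracting a co-t-structure in $K^b(\proj(A))$ from product-closure of $\mathbf{X}$): one must show that the abstract product-closure condition in the big category $D(A)$ descends to the combinatorially rigid statement about $K^b(\proj(A))$, which requires controlling how compact objects interact with products and with the restriction to $D^b(A)$, and carefully matching up the three-way Koenig--Yang bijection between silting objects in $K^b(\proj(A))$, co-t-structures in $K^b(\proj(A))$, and bounded t-structures in $D^b(A)$. The other steps are comparatively formal: $(6)\Rightarrow(1)$ and $(1)\Rightarrow(2)$ are immediate, $(3)\Rightarrow(4)$ and $(5)\Rightarrow(6)$ are applications of \cite{KY}, and $(4)\Rightarrow(5)$ is a direct consequence of the locally coherent description of $\mathbf{H}$ from Theorem \ref{thm lifting}.
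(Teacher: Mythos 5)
Your overall architecture matches the paper's (Koenig--Yang for $(3)\Leftrightarrow(4)$, product-closure of a silting aisle for $(1)\Rightarrow(2)$, approximations for $(2)\Rightarrow(3)$, Theorem \ref{thm lifting} for the heart statements), but there are concrete problems. First, your justification of $(1)\Rightarrow(2)$ is wrong as stated: a left orthogonal ${}^\perp\mathcal{C}$ is closed under \emph{coproducts}, not products, since $\Hom(\coprod Z_i,C)\cong\prod\Hom(Z_i,C)$ while $\Hom(\prod Z_i,C)$ does not decompose. The correct (and easy) argument is that $\mathbf{X}=T^{\perp_{>0}}$ is a \emph{right} orthogonal of the set $\{T[-n]\mid n>0\}$, and right orthogonals are product-closed. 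Second, and more seriously, $(2)\Rightarrow(3)$ --- which you yourself identify as the main obstacle --- is not actually proved; you only describe what would need to happen. The missing argument is: for $Z$ compact, product-closure of $\mathbf{X}$ yields a universal left $\mathbf{X}$-approximation $Z\to X'$; by Theorem \ref{thm lifting}(1) one writes $X'=\underrightarrow{\operatorname{hocolim}}_{i}X_i$ with $X_i\in\mathcal{X}$ and factors through some $X_i$ by compactness of $Z$; then one brutally truncates $X_i$ (viewed in $K^-(\proj(A))$) to land in $\mathcal{X}\cap K^b(\proj(A))$, again factoring by compactness; finally Krull--Schmidtness of $K^b(\proj(A))$ gives a \emph{minimal} left approximation and hence the co-t-structure triangle via the dual of \cite[Lemma 2.1]{J}. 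Without this chain the implication is unproven.

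Third, your cycle closes with $(5)\Rightarrow(6)$, which is the hardest direction to attack head-on, and your sketch has a real gap: projectivity of $\Lambda'$ in $\mathbf{H}$ only controls $\operatorname{Ext}^1_{\mathbf{H}}$, which need not account for all of $\Hom_{D(A)}(T',T'[i])$ for $i\geq 2$ (the heart of a t-structure is not in general ``closed under higher extensions'' in the ambient category), nor does generation of $\mathbf{H}$ by $\Lambda'$ immediately give that $T'$ generates the aisle $\mathbf{X}$. The paper sidesteps this entirely by proving $(5)\Rightarrow(4)$ instead, which is immediate: the finitely presented objects of $\mathbf{H}\cong\Mod(\Lambda')$ are $\fpmod(\Lambda')$, and by Theorem \ref{thm lifting}(2) they are also $\mathcal{H}$; the implication $(4)\Rightarrow(6)$ then comes for free from the Koenig--Yang correspondence already used for $(3)\Leftrightarrow(4)$, together with $(6)\Rightarrow(5)$ from \cite[Proposition 2]{NSZ}. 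I would recommend reorganizing your closing implications along these lines; your $(4)\Rightarrow(5)$ via the ind-completion description of the locally coherent heart is fine, but it is not needed if you route through $(5)\Rightarrow(4)$ and $(6)\Rightarrow(5)$.
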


\begin{proof}
$(1)\Rightarrow(2)$: To show $(2)$ it suffices to check that every object $Z$ in $K^b(\proj(A))$ admits a left $\mathcal{X}$-approximation $f\colon Z\longrightarrow X$, where $X$ lies in $K^b(\proj(A))$. Indeed, since by assumption $K^b(\proj(A))$ is a Krull-Schmidt category, it would follow from \cite[Corollary 1.4]{KS} that we can choose $f$ left-minimal, which by the dual of \cite[Lemma 2.1]{J} implies that there is a triangle
$$\xymatrix{C\ar[r] & Z \ar[r]^f & X \ar[r] & C[1]}$$
in $K^b(\proj(A))$ with $C$ in $^\perp\mathcal{X}$, as desired. Now take $Z$ in $K^b(\proj(A))$. Since $(\mathbf{X},\mathbf{Y})$ is a silting t-structure, there is a co-t-structure $(^\perp\mathbf{X},\mathbf{X})$ in $D(A)$ and we get a left $\mathbf{X}$-approximation $f'\colon Z\longrightarrow X'$ (see \cite[Remark 4]{NSZ}). By assumption, we can write $X'=\underrightarrow{\operatorname{hocolim}}_{i\in I}X_i$ with $X_i$ in $\mathcal{X}$.
Since $Z$ is compact, the map $f'$ factors through some $X_i$ via a map $f_i'\colon Z\longrightarrow X_i$, which by construction is a left $\mathcal{X}$-approximation. Now we can view $X_i$ as an object in $K^-(\proj(A))$ and use brutal truncations to write $X_i\cong\underrightarrow{\operatorname{hocolim}}_{j\in\mathbb{N}}\sigma^{\geq -j}(X_i)$ with $\sigma^{\geq -j}(X_i)$ in $K^b(\proj(A))$. For sufficiently large $j$ the objects $\sigma^{\geq -j}(X_i)$ belong to $\mathcal{X}$ (see the proof of $(3)\Rightarrow (1)$ in Theorem \ref{thm rest}). Also, using again the fact that $Z$ is compact, we get that $f_i'$ factors through $\sigma^{\geq -j}(X_i)$ for $j$ big enough. This way, we get the required approximation.

{\it Remaining implications}:
Since $(\mathcal{X},\mathcal{Y})$ is intermediate, the co-t-structure in $(2)$ with coaisle $\mathcal{X}\cap K^b(\proj(A))$ is intermediate with respect to the standard co-t-structure in $K^b(\proj(A))$, so it is necessarily bounded. Therefore,
$(2)\Leftrightarrow (3)$ is part of the bijections in \cite[Theorem 6.1]{KY}, and it is further equivalent to stating that $(\mathcal{X},\mathcal{Y})=(T'^{\perp_{> 0}},T'^{\perp_{\leq 0}})$ for a silting object $T'$ in $K^b(\proj(A))$. The latter, however, is easily seen to be equivalent to $(5)$.
Moreover, $(5)\Rightarrow (1)$ is trivial, and $(5)\Rightarrow (4)$ holds by \cite[Proposition 2]{NSZ}. Finally, $(4)\Rightarrow(3)$ holds by Theorem \ref{thm lifting}, since $\mathcal{H}$ is the subcategory of finitely presented objects of $\mathbf{H}$. 

Note that if the equivalent conditions in the proposition are fulfilled, then $\Lambda$ and $\Lambda'$ are Morita equivalent,  and $T$ is equivalent to $T'$ as a silting object, that is the smallest subcategory of $D(A)$ containing $T$ and closed under coproducts and direct summands  coincides with the corresponding subcategory containing $T'$.
\end{proof}

Note that Proposition \ref{silting} cannot be easily generalised. Using \cite[Section 4]{BHPST}, there are noetherian rings $A$ admitting pure projective tilting modules that are not equivalent to finitely presented ones. These tilting modules induce compactly generated HRS-t-structures in $D(A)$, which necessarily restrict to $D^b(A)$ by Proposition \ref{HRS}, and which are both silting and cosilting. However, they are not silting with respect to a compact silting object and their hearts are Grothendieck categories with a projective generator, but not module categories.

\smallskip

By the discussion above, the process of lifting t-structures via homotopy colimits is closely related to cosilting theory. In the final part of this section, we will explore a dual way of lifting t-structures, which is closely connected to silting theory. For this purpose, we restrict ourselves again to the setting of finite dimensional algebras. 

\begin{theorem}\label{thm silt}
Let $A$ be a finite dimensional algebra over a field $k$, and 
let $(\mathcal{X},\mathcal{Y})$ be an intermediate t-structure in $D^b(A)$ with heart $\mathcal{H}$. Then the following holds.
\begin{enumerate}
\item $(\mathbf{X},\mathbf{Y})=({}^\perp \mathcal{Y},({}^\perp \mathcal{Y})^\perp)$ is a silting t-structure in $D(A)$ with respect to a bounded silting complex of projective $A$-modules.
\item An object $H \in \mathbf{H}$ satisfies $\Hom_{\mathbf{H}}(H,\mathcal{H})=0$ if and only if $H=0$.
\end{enumerate}
\end{theorem}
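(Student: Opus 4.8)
The plan is to dualize the lifting construction of Section \ref{lifting}, replacing homotopy colimits by homotopy limits (equivalently, replacing $\mathcal{X}^\perp$ by ${}^\perp\mathcal{Y}$) and invoking the Koenig--Yang machinery to identify the resulting t-structure as silting. For part (1), first I would check that $(\mathbf{X},\mathbf{Y}) = ({}^\perp\mathcal{Y}, ({}^\perp\mathcal{Y})^\perp)$ is genuinely a t-structure in $D(A)$. The cleanest route is to exhibit a silting complex directly. Since $(\mathcal{X},\mathcal{Y})$ is intermediate in $D^b(A)$, the class ${}^\perp(\mathcal{X}\cap K^b(\proj(A)))$ is the coaisle of a bounded co-t-structure in $K^b(\proj(A))$ — wait, this is exactly condition (3) of Proposition \ref{silting}, which is \emph{not} automatic. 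So instead I would argue as follows: by \cite[Theorem 6.1]{KY} (or rather the half of it that always applies), every intermediate t-structure in $D^b(A)$ with $A$ finite dimensional comes from a silting object $P$ in $K^-(\proj(A))$, via $\mathcal{X} = P^{\perp_{>0}}\cap D^b(A)$, where $P$ is a (possibly unbounded below) complex of projectives; more precisely one uses that the co-aisle $\mathcal{X}\cap K^{-,b}(\proj(A))$ of the induced co-t-structure is generated by such a $P$. Then set $\mathbf{X} := P^{\perp_{>0}}$ and $\mathbf{Y} := P^{\perp_{\leq 0}}$ in $D(A)$; this is a silting t-structure by definition of silting object, provided one verifies $P$ is indeed silting in $D(A)$, i.e. that $(P^{\perp_{>0}},P^{\perp_{\leq 0}})$ is a t-structure. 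The latter holds because $P$ can be taken to be a bounded complex of projectives after a suitable truncation argument using the intermediate bounds $m,n$: negative brutal truncations $\sigma^{\geq -i}(P)$ become honest bounded silting complexes for $i$ large, by the same truncation reasoning used in the proof of Theorem \ref{thm rest}, $(3)\Rightarrow(1)$.

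The key identification is then $\mathbf{Y} = P^{\perp_{\leq 0}} = {}^\perp\mathcal{Y}$'s perpendicular; concretely I would show $(\mathbf{X}\cap D^b(A), \mathbf{Y}\cap D^b(A)) = (\mathcal{X},\mathcal{Y})$ so that this silting t-structure restricts to the given one, and then argue uniqueness forces $\mathbf{X} = {}^\perp\mathcal{Y}$. For the restriction, note $\mathbf{X}\cap D^b(A) = P^{\perp_{>0}}\cap D^b(A) = \mathcal{X}$ by the Koenig--Yang correspondence, and the coaisle part follows by taking perpendiculars inside $D^b(A)$. To see $\mathbf{X} = {}^\perp\mathcal{Y}$: the inclusion ${}^\perp\mathcal{Y}\supseteq \mathbf{X}$ is clear since $\mathbf{Y}\supseteq\mathcal{Y}$; conversely if $Z\in{}^\perp\mathcal{Y}$, decompose $Z$ via the silting t-structure as $X\to Z\to Y\to X[1]$ with $X\in\mathbf{X}$, $Y\in\mathbf{Y}$, and one must show $Y=0$. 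Here $Y\in\mathbf{Y} = P^{\perp_{\leq 0}}$ and also $Y\in{}^\perp\mathcal{Y}$ (being an extension of $Z\in{}^\perp\mathcal{Y}$ and $X[1]\in\mathbf{X}[1]\subseteq{}^\perp\mathcal{Y}$... one needs $\mathbf{X}[1]\subseteq{}^\perp\mathcal{Y}$, i.e. $\mathbf{X}\subseteq {}^\perp(\mathcal{Y}[-1]) = {}^\perp(\mathcal{Y}[-1])$, which holds since $\mathcal{Y}[-1]\subseteq\mathbf{Y}[-1]\subseteq\mathbf{Y}$ as $\mathcal{Y}$ is a coaisle closed under negative shifts... let me instead just use that $\mathcal{Y}$ generates $\mathbf{Y}$ under the operations defining a coaisle, so ${}^\perp\mathcal{Y} = {}^\perp\mathbf{Y} = \mathbf{X}$ directly). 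The honest statement is: since $\mathbf{Y}$ is the smallest coaisle containing $\mathcal{Y}$ (it equals $({}^\perp\mathcal{Y})^\perp$ which is built from $\mathcal{Y}$ by closure under extensions, products, negative shifts), we get ${}^\perp\mathbf{Y} = {}^\perp\mathcal{Y} = \mathbf{X}$, so $\mathbf{X}$ is the aisle and the pair is the claimed t-structure. Finally $P$ being a bounded silting complex of projectives follows from the truncation step above together with the fact that $\mathbf{X}$ is closed under products (it is a right perpendicular), matching the characterization in Proposition \ref{silting}.

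For part (2), I want: if $H\in\mathbf{H}$ satisfies $\Hom_{\mathbf{H}}(H,\mathcal{H}) = 0$ then $H = 0$. The strategy is to use that $\mathcal{H}$, being the subcategory $\mathbf{H}\cap D^b(A)$ of compact-like objects in $\mathbf{H}$, contains enough objects to detect morphisms out of any nonzero object — but now in the \emph{dual} direction compared to Theorem \ref{thm lifting}(2), where every object was a directed homotopy colimit of objects of $\mathcal{H}$. Here instead $\mathbf{H} \cong \Mod(\Lambda)$ for $\Lambda = \End_{D(A)}(P)$ (since $P$ is silting, its heart is $\Mod(\Lambda)$ by \cite{PV,NSZ}), and under this equivalence $\mathcal{H} = \mathbf{H}\cap D^b(A)$ corresponds to $\fpmod(\Lambda)$ — this uses that $P$ is a bounded complex of finitely generated projectives so $\Lambda$ is finite dimensional, hence $\fpmod(\Lambda) = \Mod(\Lambda)\cap D^b(A)$ under the heart equivalence (the finitely presented objects of the heart are the cohomologically bounded, finitely presented ones). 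Then the claim becomes: in $\Mod(\Lambda)$, if $\Hom_\Lambda(M,N) = 0$ for all finitely presented $N$, then $M = 0$. But $\Lambda$ itself is finitely presented, and $\Hom_\Lambda(M,\Lambda) = 0$ does \emph{not} force $M=0$ in general — so I must be more careful. The right statement: $\mathcal{H}$ cogenerates $\mathbf{H}$ means every object of $\mathbf{H}$ embeds in a product of objects of $\mathcal{H}$, equivalently $\mathcal{H}$ contains an injective cogenerator's worth of information. Since $\Lambda$ is finite dimensional, $D(\Lambda) := \Hom_k(\Lambda,k)$ is a finitely presented injective cogenerator of $\Mod(\Lambda)$; it lies in $\fpmod(\Lambda) = \mathcal{H}$; and $\Hom_\Lambda(M, D(\Lambda)) = 0$ implies $M = 0$ since $D(\Lambda)$ is a cogenerator. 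This is the clean argument, and I expect the main obstacle to be rigorously pinning down the equivalence $\mathcal{H}\cong\fpmod(\Lambda)$ with $\Lambda$ finite dimensional — i.e. controlling the compactness/finite-presentation of $P$ and of $\Lambda = \End_{D(A)}(P)$ — which hinges on the bounded-silting-complex-of-projectives claim from part (1). Once that is in hand, part (2) is a one-line consequence of the existence of a finite-dimensional algebra's minimal injective cogenerator.
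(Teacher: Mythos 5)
There is a genuine gap in both parts, and in both cases it is the same gap: you assume the conclusion of part (1) in a form that is strictly stronger than what is true. For part (1), the first issue is that you never actually establish that $({}^\perp\mathcal{Y},({}^\perp\mathcal{Y})^\perp)$ is a t-structure: you appeal to ``$({}^\perp\mathcal{Y})^\perp$ is the smallest coaisle containing $\mathcal{Y}$, built by closure under extensions, products and negative shifts,'' but cogenerating a t-structure from a set of objects is exactly the delicate point here (unlike generation via \cite{AJS}, there is no general existence theorem for coaisles cogenerated by a set). The second, more serious issue is your route via Koenig--Yang: \cite[Theorem 6.1]{KY} only produces a silting object in $K^b(\proj(A))$ for bounded t-structures with \emph{length} heart, and there is no ``half of it that always applies'' yielding a silting $P$ with $\mathcal{X}=P^{\perp_{>0}}\cap D^b(A)$ for an arbitrary intermediate t-structure. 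Worse, your truncation step would make $P$ a \emph{compact} bounded silting complex, which would force $\mathcal{H}\cong\fpmod(\Lambda)$ for a finite dimensional $\Lambda$ by Proposition \ref{silting}; this fails for general intermediate t-structures (and would trivialize Corollary \ref{cor restrict}). The theorem only claims a bounded silting complex of possibly infinitely generated projectives. The argument the paper uses is different: dualize the brutal-truncation argument of Lemma \ref{comp.gen.} to write ${}^\perp\mathcal{Y}={}^\perp\mathcal{S}$ with $\mathcal{S}$ a set of bounded complexes of finitely generated \emph{injectives}, convert via the Nakayama functor and the Auslander--Reiten formula to ${}^\perp\mathcal{S}=(\nu^{-1}\mathcal{S})^\perp$ with $\nu^{-1}\mathcal{S}$ compact, and then invoke \cite[Theorem 3.6]{MV}, which simultaneously gives existence of the t-structure and the bounded silting complex. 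None of these steps appears in your proposal.

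Part (2) inherits the same problem: your plan rests on $\mathbf{H}\cong\Mod(\Lambda)$ with $\Lambda=\End_{D(A)}(P)$ finite dimensional and $\mathcal{H}\cong\fpmod(\Lambda)$, so that $\mathfrak{D}(\Lambda)\in\mathcal{H}$ is an injective cogenerator. This only holds in the algebraic case; in general the silting object is not compact and $\mathbf{H}$ is not a module category, so the ``one-line consequence'' is not available. The intended argument is elementary and does not pass through any module category: given $H\in\mathbf{H}$ with $\Hom_{\mathbf{H}}(H,\mathcal{H})=0$, every $Y[1]\in\mathcal{Y}[1]$ sits in a triangle $H'\to Y[1]\to Y'\to H'[1]$ with $H'\in\mathcal{H}$ and $Y'\in\mathcal{Y}$ (truncate $Y[1]$ with respect to $(\mathcal{X},\mathcal{Y})$); since $H\in\mathbf{X}={}^\perp\mathcal{Y}$ kills $Y'$ and by hypothesis kills $H'$, it kills $Y[1]$, so $H\in{}^\perp(\mathcal{Y}[1])=\mathbf{X}[1]$; combined with $H\in\mathbf{Y}[1]$ this gives $H=0$. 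I would recommend rebuilding part (1) around the injective-coresolution/Nakayama mechanism and replacing part (2) entirely.
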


\begin{proof}
$(1)$: Since $A$ is a finite dimensional algebra, dual arguments to those in the proof of Lemma \ref{comp.gen.} imply that ${}^\perp \mathcal{Y}={}^\perp \mathcal{S}$, where $\mathcal{S}\subseteq\mathcal{Y}$ is a set of bounded complexes of finitely generated injective $A$-modules. Without loss of generality, we can assume that $\mathcal{S}$ is closed under negative shifts. Our aim is now to show that $({}^\perp \mathcal{S},({}^\perp \mathcal{S})^\perp)$ is a silting t-structure with respect to a bounded silting complex of projective $A$-modules. By $\nu$ and $\nu^{-1}$ we denote the total derived functors of the Nakayama functor $-\otimes_A\mathfrak{D}(A)$ and its right adjoint $\Hom_A(\mathfrak{D}(A),-)$. Here, $\mathfrak{D}=\Hom_k(-,k)$ denotes the standard duality. Recall that $\nu$ and $\nu^{-1}$ induce quasi-inverse triangle equivalences between $K^b(\proj(A))$ and $K^b(\inj(A))$, where the latter denotes the bounded homotopy category of finitely generated injective $A$-modules. Moreover, we have the following natural isomorphism
$$\mathfrak{D}\Hom_{D(A)}(\nu^{-1}M,N)\cong\Hom_{D(A)}(N,M)$$
for all $M$ in $K^b(\inj(A))$ and $N$ in $D(A)$ (see \cite[Ch.1, Section 4.6]{H}, \cite[Section 2.3]{KY} and \cite{KL}).
In our situation, it follows that the subcategory ${}^\perp \mathcal{S}$ coincides with $(\nu^{-1}\mathcal{S})^\perp$, where all the objects in $\nu^{-1}\mathcal{S}$ are compact in $D(A)$. Moreover, by assumption, we know that there are $m,n\in\mathbb{Z}$ with $D(A)^{\leq m}\subseteq{}^\perp \mathcal{S}\subseteq D(A)^{\leq n}$. Hence, we can invoke \cite[Theorem 3.6]{MV} to conclude that $({}^\perp \mathcal{S},({}^\perp \mathcal{S})^\perp)$ is a silting t-structure of the desired form.

$(2)$: Take $H$ in $\mathbf{H}$ with $\Hom_{\mathbf{H}}(H,\mathcal{H})=0$. First, observe that, by using approximation triangles with respect to $(\mathcal{X},\mathcal{Y})$, every object $Y[1]$ in $\mathcal{Y}[1]$ appears in a triangle
$$\xymatrix{H'\ar[r] & Y[1] \ar[r] & Y' \ar[r] & H'[1]}$$
with $H'$ in $\mathcal{H}$ and $Y'$ in $\mathcal{Y}$. Since $\Hom_{D(A)}(H,\mathcal{Y})=0$ and $\Hom_{\mathbf{H}}(H,\mathcal{H})=0$ by assumption, it follows that also $\Hom_{D(A)}(H,\mathcal{Y}[1])=0$. Hence, $H$ belongs to $\mathbf{X}[1]$. As $H$ also belongs to $\mathbf{Y}[1]$, it must be zero, as desired.
\end{proof}

We obtain the following general observation on restricting t-structures.

\begin{corollary}\label{cor restrict}
Let $A$ be a finite dimensional algebra over a field $k$, and 
let $(\mathcal{X},\mathcal{Y})$ be an intermediate t-structure in $D^b(A)$ with heart $\mathcal{H}$. Then there is a unique t-structure $(\mathbf{X},\mathbf{Y})$ in $D(A)$ that restricts to $(\mathcal{X},\mathcal{Y})$ if and only if $\mathcal{H}$ is equivalent to $\fpmod(\Lambda)$ for a finite dimensional $k$-algebra $\Lambda$.
\end{corollary}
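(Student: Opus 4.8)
The plan is to establish the two implications separately, using the two different lifting constructions provided by Theorem~\ref{thm lifting} (via homotopy colimits, controlled by cosilting) and Theorem~\ref{thm silt} (the dual lifting, controlled by silting), and to play them against each other.

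First I would prove the ``if'' direction. Suppose $\mathcal{H}\cong\fpmod(\Lambda)$ for a finite dimensional $k$-algebra $\Lambda$. By Proposition~\ref{silting} (equivalence of $(4)$ with $(1)$ and $(6)$), the lifted t-structure $(\mathbf{X},\mathbf{Y})$ obtained via homotopy colimits is then a silting t-structure with respect to a \emph{compact} silting object $T'$, so that $(\mathbf{X},\mathbf{Y})=({}^{\perp_{>0}}\!\!\mathcal{Y},\ldots)$ is simultaneously described as $(T'^{\perp_{>0}},T'^{\perp_{\le 0}})$ with $T'\in K^b(\proj(A))$. Now let $(\mathbf{X}',\mathbf{Y}')$ be \emph{any} t-structure in $D(A)$ restricting to $(\mathcal{X},\mathcal{Y})$. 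Since $T'$ lies in $K^b(\proj(A))\subseteq D^b(A)$ and $T'\in\mathcal{X}\cap\mathcal{Y}[1]\subseteq\mathbf{X}'\cap\mathbf{Y}'[1]$, the self-orthogonality $\Hom_{D(A)}(T',T'[i])=0$ for $i\neq 0$ together with the fact that $T'$ generates $K^b(\proj(A))$ forces $\mathbf{Y}'=T'^{\perp_{\le 0}}$: indeed the aisle of $(\mathbf{X}',\mathbf{Y}')$ contains $T'$ and all its positive shifts hence contains the smallest coproduct-closed, extension-closed, shift-closed class generated by $T'$, which equals ${}^\perp(T'^{\perp_{\le 0}})$, and by intermediacy this already exhausts the aisle. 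Thus $(\mathbf{X}',\mathbf{Y}')=(\mathbf{X},\mathbf{Y})$, giving uniqueness. (Alternatively, one argues directly: the standard t-structure is the unique t-structure on $D(\Lambda)$ restricting to the standard one on $D^b(\fpmod\Lambda)$ because $\Lambda$ is noetherian and finite-dimensional, then transport along the derived equivalence given by $T'$.)

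For the ``only if'' direction I would argue by contraposition, or rather produce two genuinely different lifts when $\mathcal{H}$ is \emph{not} of the form $\fpmod(\Lambda)$. Consider the homotopy-colimit lift $(\mathbf{X},\mathbf{Y})$ from Theorem~\ref{thm lifting} with heart $\mathbf{H}=\underrightarrow{\operatorname{hocolim}}(\mathcal{H})$, and the dual lift $(\mathbf{X}^\ast,\mathbf{Y}^\ast)=({}^\perp\mathcal{Y},({}^\perp\mathcal{Y})^\perp)$ from Theorem~\ref{thm silt}, which is a silting t-structure with respect to a bounded silting complex of projectives; call its heart $\mathbf{H}^\ast$. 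Both restrict to $(\mathcal{X},\mathcal{Y})$: for the first this was noted after Theorem~\ref{thm lifting}, and for the second one checks $\mathbf{X}^\ast\cap D^b(A)=\mathcal{X}$ (since ${}^\perp\mathcal{Y}\cap D^b(A)={}^{\perp}_{D^b(A)}\mathcal{Y}=\mathcal{X}$ by the t-structure axioms in $D^b(A)$) and dually for the coaisle. Suppose these two lifts coincided, $(\mathbf{X},\mathbf{Y})=(\mathbf{X}^\ast,\mathbf{Y}^\ast)$. Then $\mathbf{H}\cong\mathbf{H}^\ast$ is on one hand a locally coherent Grothendieck category whose finitely presented objects form $\mathcal{H}$ (Theorem~\ref{thm lifting}(2)), and on the other hand, being a silting heart with respect to a \emph{compact} silting object, it is $\Mod(\Lambda')$ for a finite dimensional $k$-algebra $\Lambda'$ by \cite[Proposition 2]{NSZ} --- but wait, Theorem~\ref{thm silt} only gives a bounded, not necessarily compact, silting complex, so this step needs care. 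The clean way is: if $(\mathbf{X},\mathbf{Y})=(\mathbf{X}^\ast,\mathbf{Y}^\ast)$ then $\mathbf{H}$ is cogenerated by $\mathcal{H}$ (Theorem~\ref{thm silt}(2)) \emph{and} has $\mathcal{H}$ as its finitely presented objects and is Grothendieck; by Proposition~\ref{silting}, to conclude $\mathcal{H}\cong\fpmod(\Lambda)$ it suffices to verify one of the equivalent conditions there, and the natural one is $(2)$: $\mathbf{X}$ is closed under products. The dual lift's aisle ${}^\perp\mathcal{Y}$ is of the form $(\nu^{-1}\mathcal{S})^\perp={}^\perp\mathcal{S}$ with $\nu^{-1}\mathcal{S}$ a set of compact objects, and a class of the form $\mathcal{U}^\perp$ for compact $\mathcal{U}$ is always closed under products; hence $\mathbf{X}^\ast$ is product-closed, so if $\mathbf{X}=\mathbf{X}^\ast$ then $\mathbf{X}$ is product-closed, so by Proposition~\ref{silting} $\mathcal{H}\cong\fpmod(\Lambda)$. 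Contrapositively, if $\mathcal{H}$ is not of this form, the two lifts are distinct, so there is no unique t-structure restricting to $(\mathcal{X},\mathcal{Y})$.

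The main obstacle I anticipate is making the ``only if'' direction airtight: one must be sure that the homotopy-colimit lift and the dual silting lift are \emph{honestly different} t-structures in $D(A)$ (not merely presented differently), and the cleanest route is precisely to show that if they agreed then $\mathbf{X}$ would be product-closed, triggering Proposition~\ref{silting}; so the real content is verifying that the dual lift's aisle ${}^\perp\mathcal{Y}=(\nu^{-1}\mathcal{S})^\perp$ is closed under products, which follows because right-perpendicular categories to sets of \emph{compact} objects commute with products, together with the identification ${}^\perp\mathcal{Y}={}^\perp\mathcal{S}=(\nu^{-1}\mathcal{S})^\perp$ carried out in the proof of Theorem~\ref{thm silt}(1). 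For the ``if'' direction the only subtlety is the uniqueness argument, where intermediacy of the competing t-structure is what pins down the aisle from the silting object $T'$; this is routine given the Koenig--Yang/Liu machinery already invoked.
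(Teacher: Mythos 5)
Your ``only if'' direction is sound and is essentially the paper's argument: uniqueness forces the homotopy-colimit lift of Theorem \ref{thm lifting} and the dual lift of Theorem \ref{thm silt} to coincide (both do restrict to $(\mathcal{X},\mathcal{Y})$, as you check), and then Proposition \ref{silting} applies; your route through condition $(2)$ via the identification ${}^\perp\mathcal{Y}=(\nu^{-1}\mathcal{S})^\perp$ with $\nu^{-1}\mathcal{S}$ compact is fine, though you could invoke condition $(1)$ directly, since Theorem \ref{thm silt}(1) already says the dual lift is silting and condition $(1)$ does not require compactness.

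The ``if'' direction has a genuine gap at the uniqueness step. From $T'\in\mathcal{X}\subseteq\mathbf{X}'$ you correctly get $\mathbf{X}=T'^{\perp_{>0}}\subseteq\mathbf{X}'$, since $\mathbf{X}$ is the aisle generated by the compact silting object $T'$. But the claim that ``by intermediacy this already exhausts the aisle'' does not hold: a competing t-structure $(\mathbf{X}',\mathbf{Y}')$ restricting to $(\mathcal{X},\mathcal{Y})$ is not assumed intermediate, and even for intermediate t-structures an inclusion of aisles is far from forcing equality (there are many intermediate aisles containing a given one). Also note that your assertion $T'\in\mathcal{Y}[1]$ is false unless $T'$ is tilting. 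The missing idea, which is how the paper closes the argument, is to produce the \emph{reverse} inclusion by exhibiting $(\mathbf{X},\mathbf{Y})$ simultaneously as a cosilting t-structure: setting $C:=\nu T'[-1]\in K^b(\inj(A))$, the Auslander--Reiten formula gives $(\mathbf{X},\mathbf{Y})=({}^{\perp_{\leq 0}}C,{}^{\perp_{>0}}C)$ and $C\in\mathbf{Y}\cap D^b(A)=\mathcal{Y}\subseteq\mathbf{Y}'$; since $\mathbf{Y}'$ is closed under negative shifts, every $X'\in\mathbf{X}'={}^\perp\mathbf{Y}'$ satisfies $\Hom_{D(A)}(X',C[i])=0$ for $i\leq 0$, whence $\mathbf{X}'\subseteq{}^{\perp_{\leq 0}}C=\mathbf{X}$. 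Combined with your inclusion this yields $(\mathbf{X},\mathbf{Y})=(\mathbf{X}',\mathbf{Y}')$. Your parenthetical alternative (transport from the standard t-structure on $D(\Lambda)$) secretly needs the same two-sided argument, so it does not bypass the issue.
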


\begin{proof}
First, let $(\mathbf{X},\mathbf{Y})$ be the unique t-structure in $D(A)$ restricting to $(\mathcal{X},\mathcal{Y})$. It follows that $(\mathbf{X},\mathbf{Y})$ arises from $(\mathcal{X},\mathcal{Y})$ via the two different ways of lifting discussed in Section \ref{lifting} and Theorem \ref{thm silt}. Thus, we can use Proposition \ref{silting} to conclude that $\mathcal{H}$ is equivalent to $\fpmod(\Lambda)$ for a finite dimensional $k$-algebra $\Lambda$. Conversely, suppose that $\mathcal{H} \cong \fpmod(\Lambda)$ for a finite dimensional $k$-algebra $\Lambda$. We consider the t-structure $(\mathbf{X},\mathbf{Y})$ in $D(A)$ obtained from the lifting procedure in Section \ref{lifting}. Again by Proposition \ref{silting}, it follows that 
$$(\mathbf{X},\mathbf{Y})=(T^{\perp_{> 0}},T^{\perp_{\leq 0}})$$
for a compact silting object $T$. As in the proof of Theorem \ref{thm silt}, we will make use of the derived Nakayama functor $\nu$ and the natural isomorphism
$$\mathfrak{D}\Hom_{D(A)}(M,N)\cong\Hom_{D(A)}(N,\nu M)$$
for $M$ in $K^b(\proj(A))$ and $N$ in $D(A)$.
It follows that $T^{\perp_{> 0}}={}^{\perp_{\leq 0}}C$ for $C:=\nu T[-1]$. Indeed, for an object $X$ in $D(A)$ we have
$$X\in T^{\perp_{> 0}}\Leftrightarrow \Hom_{D(A)}(T,X[n>0])=0$$
$$\Leftrightarrow \Hom_{D(A)}(X[n>0],\nu T)=0$$
$$\Leftrightarrow \Hom_{D(A)}(X,\nu T[-1][n\leq 0])=0\Leftrightarrow X\in {}^{\perp_{\leq 0}}C.$$
Similarly, we have $T^{\perp_{\leq 0}}={}^{\perp_{> 0}}C$ and, thus, $(\mathbf{X},\mathbf{Y})$ is the cosilting t-structure with respect to the cosilting complex $C$ in $K^b(\inj(A))$. Now let $(\mathbf{X}',\mathbf{Y}')$ be any t-structure in $D(A)$ that restricts to $(\mathcal{X},\mathcal{Y})$. Since $T$ belongs to $\mathcal{X}\subset\mathbf{X}'$ and $C$ lies in $\mathcal{Y}\subset\mathbf{Y}'$, we get $\mathbf{X}\subset\mathbf{X}'$ and $\mathbf{Y}\subset\mathbf{Y}'$ showing that $(\mathbf{X},\mathbf{Y})=(\mathbf{X}',\mathbf{Y}')$, as desired.
\end{proof}

Corollary \ref{cor restrict} and its proof show that the two presented ways of lifting t-structures in $D^b(A)$, in general, provide us with distinct classes of t-structures in $D(A)$. In fact, the heart of every t-structure in $D(A)$ that can be obtained via both lifting procedures is necessarily equivalent to a module category.

\smallskip

Moreover, note that part (2) of Theorem \ref{thm silt} is a weaker dual of the generation property of $\mathcal{H}$ in Theorem \ref{thm lifting}(2), which was essential to describe the cosilting t-structures we obtain from lifting.
In general, we do not know how to classify the silting t-structures appearing in Theorem \ref{thm silt}. One of the reasons is that
cogenerating t-structures from a given set of objects seems to be a far more delicate task than generating t-structures. In particular, we cannot expect to simply replace homotopy colimits by homotopy limits to prove dual results to those before. In the case of HRS-t-structures, however, we can provide a complete classification. To make this precise, we need to introduce further terminology. Recall that a subcategory $\mathbf{C}$ of $\Mod(A)$ is called {\bf definable} if it is closed under products, direct limits and pure submodules. Moreover, a submodule is called {\bf pure} if the induced short exact sequence remains exact after tensoring with any other $A$-module. Note that for a torsion pair $(\mathbf{T},\mathbf{F})$ in $\Mod(A)$, the class $\mathbf{T}$ is definable if and only if it is closed under products and pure submodules, and the class $\mathbf{F}$ is definable if and only if it is closed under direct limits, i.e. the torsion pair is of finite type. We are now ready to state the result, which is dual to Proposition \ref{HRS}.

\begin{proposition}\label{HRS dual}
Let $A$ be a finite dimensional algebra over a field $k$.
By assigning to an intermediate t-structure $(\mathcal{X},\mathcal{Y})$ in $D^b(A)$ $($respectively, to a torsion pair $(\mathcal{T},\mathcal{F})$ in $\fpmod(A))$ the t-structure $({}^\perp \mathcal{Y},({}^\perp \mathcal{Y})^\perp)$ in $D(A)$ $($respectively, the torsion pair $({}^\perp \mathcal{F},({}^\perp \mathcal{F})^\perp)$ in $\Mod(A))$, we get a commutative square of bijections as follows.
$$\xymatrix{{\left\{\begin{array}{c}\text{t-structures $(\mathcal{X},\mathcal{Y})$ in $D^b(A)$ with} \\ \text{$D^b(A)^{\leq -1}\subseteq\mathcal{X}\subseteq D^b(A)^{\leq 0}$} \end{array}\right\}}\ar[rr]^{\hspace{0.3cm}1:1}\ar@<-1ex>[dd]_{\operatorname{cogeneration}} & & {\left\{\begin{array}{c}\text{torsion pairs $(\mathcal{T},\mathcal{F})$ in $\fpmod(A)$} \end{array}\right\}}\ar[ll]\ar@<-1ex>[dd]_{\operatorname{cogeneration}} \\  \\{\left\{\begin{array}{c}\text{silting t-structures $(\mathbf{X},\mathbf{Y})$ in $D(A)$} \\ \text{with $D(A)^{\leq -1}\subseteq\mathbf{X}\subseteq D(A)^{\leq 0}$}
\end{array}\right\}}\ar[rr]^{\hspace{0.4cm}1:1}\ar@<-1ex>[uu]_{\operatorname{restriction}} & & {\left\{\begin{array}{c}\text{torsion pairs $(\mathbf{T},\mathbf{F})$ in $\Mod(A)$}\\ \text{with $\mathbf{T}$ definable}
\end{array}\right\}}\ar[ll]\ar@<-1ex>[uu]_{\operatorname{restriction}} }$$
\end{proposition}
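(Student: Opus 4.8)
The plan is to establish the commutative square in Proposition \ref{HRS dual} by combining the dual-lifting machinery from Theorem \ref{thm silt}, the classification of silting t-structures from Proposition \ref{silting} (restricted to the HRS-interval), and the already-known HRS correspondence from Proposition \ref{HRS} together with its dual on the level of torsion pairs. The two horizontal bijections come, as in Proposition \ref{HRS}, from the standard HRS-construction, so the content is in the two vertical maps and the commutativity of the square.

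First I would deal with the upper vertical map. Given a torsion pair $(\mathcal{T},\mathcal{F})$ in $\fpmod(A)$ with associated HRS-t-structure $(\mathcal{X},\mathcal{Y})$, I claim the cogeneration construction $({}^\perp\mathcal{Y},({}^\perp\mathcal{Y})^\perp)$ is again an HRS-t-structure, i.e. still lies in the interval $[-1,0]$, and corresponds to the torsion pair $({}^\perp\mathcal{F},({}^\perp\mathcal{F})^\perp)$ in $\Mod(A)$. For the interval claim: by Theorem \ref{thm silt}(1) the new t-structure $(\mathbf{X},\mathbf{Y})$ is silting; since $D^b(A)^{\leq -1}\subseteq\mathcal{X}$ gives $D(A)^{\leq -1}\subseteq\mathbf{X}$, and since $\mathcal{Y}\supseteq D^b(A)^{\geq 1}$ forces $\mathbf{X}={}^\perp\mathcal{Y}\subseteq{}^\perp(D^b(A)^{\geq 1})=D(A)^{\leq 0}$ (using ${}^\perp$ taken over all positive shifts as in the silting convention, and that $D(A)^{\leq 0}$ is generated by $A$), we land in the HRS-interval. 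To identify the associated torsion pair one applies $H^0$: the torsion class $\mathbf{T}=\mathbf{X}\cap\Mod(A)$ and one checks $\mathbf{T}={}^\perp\mathcal{F}$ computed inside $\Mod(A)$, and this $\mathbf{T}$ is definable because it is the aisle-intersection of a silting (hence homotopically-smashing, closed under products) t-structure — closure under products follows from $\mathbf{X}$ being closed under products, and closure under pure submodules and direct limits from standard facts about silting torsion classes, cf. the remarks preceding the statement.

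Next I would handle the lower vertical map (restriction), which is where Proposition \ref{HRS} does most of the work. A silting t-structure $(\mathbf{X},\mathbf{Y})$ in $D(A)$ with $D(A)^{\leq -1}\subseteq\mathbf{X}\subseteq D(A)^{\leq 0}$ is in particular compactly generated and homotopically smashing, so by Proposition \ref{HRS} it restricts to $D^b(A)$ and corresponds to a finite-type torsion pair $(\mathbf{T},\mathbf{F})$ in $\Mod(A)$; the extra hypothesis that it be \emph{silting} should be exactly equivalent, via \cite[Corollary 3.9]{A} or the silting analogue thereof, to $\mathbf{T}$ being definable (equivalently, closed under products). Restriction then lands in the left-hand lower-row-to-upper-row slots of Proposition \ref{HRS}. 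The remaining task is to verify commutativity of the square and that the two vertical composites are mutually inverse: starting from $(\mathcal{T},\mathcal{F})$, applying cogeneration to get $({}^\perp\mathcal{F},({}^\perp\mathcal{F})^\perp)$ and then restricting back to $\fpmod(A)$ should return $(\mathcal{T},\mathcal{F})$, which I would check using that $A$ is finite dimensional (so every definable torsion class restricts to a torsion class in $\fpmod(A)$, and ${}^\perp\mathcal{F}$ intersected with $\fpmod(A)$ recovers $\mathcal{T}$), and conversely that a silting HRS-t-structure is recovered from its restriction by cogeneration, which is precisely the uniqueness-type argument at the end of the proof of Corollary \ref{cor restrict} specialized to the HRS case: $\mathcal{X}\subseteq\mathbf{X}$ forces $\mathbf{X}\subseteq{}^\perp\mathcal{Y}$ and the reverse inclusion is automatic, so $(\mathbf{X},\mathbf{Y})=({}^\perp\mathcal{Y},({}^\perp\mathcal{Y})^\perp)$.

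The main obstacle I expect is pinning down the precise equivalence ``silting t-structure in the HRS-interval $\Leftrightarrow$ torsion pair with $\mathbf{T}$ definable,'' i.e. showing that the silting condition on a compactly generated HRS-t-structure is detected by definability (rather than mere finite type) of the corresponding torsion class. One direction — definable $\mathbf{T}$ yields a silting t-structure — should follow from the dual-lifting theorem (Theorem \ref{thm silt}(1)) applied to the restricted torsion pair in $\fpmod(A)$, once one knows a definable torsion class in $\Mod(A)$ over a finite dimensional algebra is the $\varinjlim$-closure of its finitely presented part with $\mathbf{F}$ already closed under limits. The converse — silting forces $\mathbf{T}$ closed under products, hence definable — uses that the aisle of a silting t-structure is a product-closed (Proposition \ref{silting}(2)-type) class. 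Carefully citing \cite{A} and \cite{BP} and checking that these results apply at the stated generality (finite dimensional algebra, $2$-term / HRS situation) is the delicate bookkeeping step; everything else is a recombination of the already-proved statements.
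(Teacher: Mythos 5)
Your overall architecture (horizontal HRS bijections, Theorem \ref{thm silt} for the left vertical map, commutativity) matches the paper, but there is a genuine gap at the step you yourself flag as delicate. You claim that a silting t-structure with $D(A)^{\leq -1}\subseteq\mathbf{X}\subseteq D(A)^{\leq 0}$ is ``in particular compactly generated and homotopically smashing, so by Proposition \ref{HRS} it restricts to $D^b(A)$.'' This conflates the two propositions: Proposition \ref{HRS} concerns compactly generated HRS-t-structures, which correspond to torsion pairs of \emph{finite type} ($\mathbf{F}$ closed under direct limits), whereas Proposition \ref{HRS dual} concerns silting HRS-t-structures, which correspond to torsion pairs with $\mathbf{T}$ \emph{definable}. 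As the discussion before the proposition makes explicit, these are different conditions on a torsion pair, and the two classes of t-structures genuinely differ (otherwise Corollary \ref{cor restrict} would be vacuous: the generated lift $(\varinjlim\mathcal{T},\varinjlim\mathcal{F})$ and the cogenerated lift $({}^\perp\mathcal{F},({}^\perp\mathcal{F})^\perp)$ coincide only when the heart is a module category). So you cannot invoke Proposition \ref{HRS} for the lower row; the paper instead gets the lower horizontal bijection from the identification of definable torsion classes with silting classes (\cite[Corollary 3.8]{AH}, \cite[Section 4.2]{AMV1}).

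Relatedly, you never prove the surjectivity of the right vertical map, which is the actual content of the paper's proof: one must show that \emph{every} torsion pair $(\mathbf{T},\mathbf{F})$ with $\mathbf{T}$ definable equals $({}^\perp\mathcal{F},({}^\perp\mathcal{F})^\perp)$ for its restriction $(\mathcal{T},\mathcal{F})$ to $\fpmod(A)$. The paper does this by Crawley-Boevey's purity argument: any $X\in{}^\perp\mathcal{F}$ embeds purely into the product of its finite dimensional quotients, which all lie in $\mathcal{T}\subseteq\mathbf{T}$, and definability of $\mathbf{T}$ (closure under products and pure submodules) then forces $X\in\mathbf{T}$. Your sketch only checks the easy left-inverse direction. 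Finally, your claim that for the left vertical map ``the reverse inclusion ${}^\perp\mathcal{Y}\subseteq\mathbf{X}$ is automatic'' is not: it would require knowing that $\mathbf{Y}$ is cogenerated by $\mathcal{Y}$, which is not established (Theorem \ref{thm silt}(2) only gives cogeneration of the heart). The paper sidesteps this by first proving the other three sides are bijections and then deducing the left vertical one from commutativity of the square.
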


\begin{proof}
The upper horizontal bijection was already discussed in Section \ref{section HRS}. Moreover, the lower horizontal bijection is a consequence of the fact that, in our setup, a torsion class $\mathbf{T}$ in $\Mod(A)$ is definable if and only if it is a silting class with respect to a silting $A$-module, which in turn is equivalent to stating that the associated HRS-t-structure is silting (see \cite[Corollary 3.8]{AH} and \cite[Section 4.2]{AMV1}). 
The right vertical map assigning $({}^\perp \mathcal{F},({}^\perp \mathcal{F})^\perp)$ to $(\mathcal{T},\mathcal{F})$ is well-defined by \cite[Section 2.3, Example 2]{CB2}. Since in our setup all torsion pairs in $\Mod(A)$ restrict to $\fpmod(A)$, restriction yields a left inverse. Let us show that these two maps are bijections. 
It suffices to check that a torsion pair $(\mathbf{T},\mathbf{F})$ in $\Mod(A)$ with $\mathbf{T}$ definable is of the form $({}^\perp \mathcal{F},({}^\perp \mathcal{F})^\perp)$, where
$(\mathcal{T},\mathcal{F}):= (\mathbf{T}\cap\fpmod(A),\mathbf{F}\cap\fpmod(A))$ denotes the restricted torsion pair in $\fpmod(A)$. By construction, we know that $({}^\perp \mathcal{F})^\perp\subseteq\mathbf{F}$. It remains to check that ${}^\perp \mathcal{F}\subseteq\mathbf{T}$ as well. Let $X$ be an object in ${}^\perp \mathcal{F}$. By \cite[Section 2.2, Example 3]{CB2}, we can realize $X$ as a pure subobject of $\prod X_i$, where $X_i$ runs through all finite dimensional quotients of $X$. By assumption, all the $X_i$ belong to $\mathcal{T}$. As $\mathcal{T}\subset\mathbf{T}$ and $\mathbf{T}$ is definable, both $\prod X_i$ and $X$ belong to $\mathbf{T}$ as well. 
Finally, it is easy to see that the left vertical map in the diagram above, given by Theorem \ref{thm silt}, makes the square commutative, hence it is a bijection, and its inverse is clearly given by restriction.
This finishes the proof.
\end{proof}

\end{document}